\documentclass[12pt]{amsart}
\usepackage{amsfonts}
\usepackage{amsmath, amsthm, amssymb,color}
\usepackage{mathrsfs}
\usepackage{latexsym}
\usepackage{txfonts}
\usepackage{bm}

\textwidth 6.50in
\topmargin -0.50in
\oddsidemargin 0in
\evensidemargin 0in
\textheight 9.00in 
\numberwithin{equation}{section}
\allowdisplaybreaks

\newtheorem{lemma}{Lemma}[section]

\newtheorem{theorem}[lemma]{Theorem}

\newtheorem{proposition}[lemma]{Proposition}
\newtheorem{definition}[lemma]{Definition}
\newtheorem{corollary}[lemma]{Corollary}
\newtheorem{example}[lemma]{Example}
\newtheorem{exercise}[lemma]{Exercise}
\theoremstyle{definition}
\newtheorem{remark}[lemma]{Remark}

\usepackage{graphics}
\usepackage{graphicx}
\usepackage{epsfig}

\newcommand{\bth}{\begin{theorem}}
\newcommand{\ethe}{\end{theorem}}

\newcommand{\bre}{\begin{remark}\em }
\newcommand{\ere}{\end{remark}}

\newcommand{\ble}{\begin{lemma}}
\newcommand{\ele}{\end{lemma}}

\newcommand{\bde}{\begin{definition}}
\newcommand{\ede}{\end{definition}}
\newcommand{\bco}{\begin{corollary}}
\newcommand{\eco}{\end{corollary}}

\newcommand{\bpr}{\begin{proposition}}
\newcommand{\epr}{\end{proposition}}

\newcommand{\bexer}{\begin{exercise}}
\newcommand{\eexer}{\end{exercise}}

\newcommand{\bexam}{\begin{example}\rm }
\newcommand{\eexam}{\end{example}}

\newcommand{\bali}{\begin{align}}
\newcommand{\eali}{\end{align}}

\newcommand\N{\mathbb N}
\newcommand\Z{\mathbb Z}

\newcommand\R{\mathbb R}

\begin{document}

\bibliographystyle{alpha}
\title[Log-convexity and cycle index polynomials 
]{Log-convexity and the cycle index polynomials with relation to 
compound Poisson distributions}
\today
\author[M. Matsui]{Muneya Matsui}
\address{Department of Business Administration, Nanzan University,
18 Yamazato-cho Showa-ku Nagoya, 466-8673, Japan}
\email{mmuneya@nanzan-u.ac.jp}

\begin{abstract} 
 We extend the exponential formula by Bender and Canfield (1996), 
 which relates log-concavity and the cycle
 index polynomials. The extension clarifies the log-convexity
 relation. The proof is by noticing the property of a compound Poisson
 distribution together with its moment generating function. We also give
 a combinatorial proof of extended ``log-convex part'' referring to Bender and
 Canfield's approach, where the formula by Bruijn and Erd\"os (1953) is
 additionally exploited. The combinatorial approach yields richer
 structural results more than log-convexity. Furthermore, we consider
 normal and binomial convolutions of sequences which satisfy the
 exponential formula. The operations generate interesting examples which
 are not included in well known laws about log-concavity/convexity. 

\end{abstract}

\keywords{Cycle index polynomials, compound Poisson, symmetric group,
 Infinitely divisible, generating function. log-convexity.} 
\subjclass[2010]{05A20 ; 60E07 : 60E10 ; 60C05}
\thanks{
Muneya Matsui's research is partly supported by the JSPS Grant-in-Aid
for Young Scientists B (16K16023).
}
\maketitle 
\section{Introduction}

The focus is to study properties of non-negative sequences $(a_k)_{k\ge
0}$ and $(b_k)_{k\ge 0}$ from those of $(c_k)_{k\ge 1}$ such that they
are linked by 
\begin{align}
\label{eq:main-relation}
 \sum_{k=0}^\infty a_k u^k = \sum_{k=0}^\infty
 \frac{b_ku^k}{k!}=\exp\Big(
 \sum_{j=1}^\infty \frac{c_ju^j}{j}
\Big).
\end{align}
The sequence $(a_k)$ is regarded as the cycle index polynomials of
symmetric group \cite{polya:1937,redfield:1927}. 
From the relation
\eqref{eq:main-relation}
Bender and Canfield 
have shown the log-concavity and almost log-convexity of $(a_k)$, 
assuming that $(c_k)_{k\ge 0}$ with $c_0=1$ is log-concave \cite[Theorem
1]{bender:canfield:1996}. 

 The first main result of this paper (Section \ref{main}) is an extension of \cite[Theorem
 1]{bender:canfield:1996}, namely, we show the log-convexity of $(a_k)$ from that
 of $(c_k)$. Our approach is to notice the probabilistic
 interpretation of \eqref{eq:main-relation} together with distributional
 properties of compound Poisson (CP for short) distributions. This approach leads to  
 an alternative proof for the previous log-concavity result
 \cite[Theorem1]{bender:canfield:1996} which was originally derived from
 combinatorial study. 
 In the same section, we specify (probably
 unknown) common characteristics between these combinatorial and probabilistic
 approaches toward \cite[Theorem1]{bender:canfield:1996}. Inspired by
 this specification, we consider combinatorial proof
 for the extended ``log-convex part''. This combinatorial proof yields
 richer results more than log-convexity, which are related to the structure
 of the cycle index and which were analyzed as \cite[Theorem
 2]{bender:canfield:1996} in the log-concave case. 

 In Section \ref{sec:convo}, we consider log-convex/concave properties of normal
 convolution of $(a_k)$ and binomial convolution of $(b_k)$ which
 satisfy the exponential formula \eqref{eq:main-relation}. Both convolutions are obtained by
multiplying the r.h.s. of \eqref{eq:main-relation}. The operations
 generate interesting sequences which are not included in known
 principles for log-concavity/convexity. Our example sequences are
 mostly by those of probability mass functions.

In the rest, we state related literature of both combinatorics and probability
and statistics. This is worth describing since they developed the
similar theories independently without having proper 
intersections.

In combinatorics the log-concavity/convexity of combinatorial  
sequences 
has been intensively studied (see Stanley \cite{stanley:1989} and Brenti \cite{brenti:1989}
for log-concavity, and see Lin and Wang \cite{lin:wan:2007} for log-convexity and references
therein). Especially log-concavity is closely related with
unimodality and they 
has been studied together.  
Bender and Canfield's result \cite{bender:canfield:1996} serves as a
tool to judge log-concavity. The method is to investigate the property
of the original
sequence 
by its exponential generating function (GF for abbreviation). 
Indeed the relation \eqref{eq:main-relation} corresponds to the GF for the cycle
index of symmetric groups (see Remark \ref{rem:relation} (i) bellow). The
method by GFs is a powerful tool for solving combinatorial problems (see \cite{wilf:1994}).

The properties are also significant in probability and statistics. 
To confine the related topics, 
they play a crucial role in  
the class of infinitely divisible (ID for short) distributions, one of
most important probability distributions in both theory and applications. The log-concavity
is a useful tool for investigating the unimodality of ID distributions
(see e.g. Sato
\cite{sato:1999} and Steutel \cite{steutel:1970}). Indeed, the class of 
strong unimodal probability density/masss functions is 
equivalent to that of log-concavity density/masss functions. The log-convexity
characterizes ID distributions on $\Z_+$, i.e. it gives a sufficient
condition for distributions to be ID \cite[Theorem 51.3]{sato:1999}. A
sufficient conditions for log-concavity/convexity of
$\Z_+$ valued ID distributions 
is given by \cite{hansen:1988}. For ID distributions since explicit
expressions of most density/masss functions are unavailable, the 
most useful tools are their characteristic functions
(\cite[p.7]{sato:1999}) which are substantially equivalent to
GFs. 

Therefore quite similar problems are investigated with similar/different methods
in these two fields, and
connections are sometimes discovered and mentioned : e.g. $n$th Bell number
corresponds to $n$th moment of the CP with mean $1$. 
But they are sparse and not systematic. Here we point out an almost complete correspondence between the
two, i.e. the cycle index of the symmetric group corresponds to the
probability mass function of ID on $\Z_+$ (Remark \ref{rem:relation} (ii)). 
This correspondence is justified by the uniqueness of the GF. All presented
results of Section \ref{main} stem essentially from this correspondence. We expect that
the relation gives a perspective to previous miscellaneous results and promote
further exchanges of the two fields. 

Note that we could give results only by combinatorial methods. However,
since the relation between combinatorics and probability and statistics
are interesting and worth describing, we present both
approaches. 


\section{Main results}\label{main}
We consider the probabilistic proof of 
\cite[Theorem 1]{bender:canfield:1996}, which clarifies the relation
between non-negative log-concave sequences and the cycle index polynomials. 
By the probabilistic proof, we extended the relation to that with non-negative
log-convex sequences. Furthermore, we give a combinatorial proof of the
extended part which yields additional new results about the cycle index polynomials.

Our focus is on the following extended theorem. 

\begin{theorem}
\label{thm:main}
 Let $(c_k)_{k\ge 1}$ be a sequence of non-negative real numbers such
 that $\sum_{k=1}^\infty (c_k/k)\, u_0^k$ exists for some $u_0>0$
 and define the sequence
 $(a_k)_{k\ge 0}$ and $(b_k)_{k\ge 0}$ by \eqref{eq:main-relation}. 
 Suppose that $(c_k)_{k\ge 1}$ is log-concave and $c_1^2\ge c_2$, then 
\begin{align}
 a_{k-1}a_{k+1} &\le a_k^2 \le \frac{k+1}{k} a_{k-1}a_{k+1}, \label{ineq:a_k}\\
 b_{k-1}b_{k+1} &\ge b_k^2 \ge \frac{k}{k+1}b_{k-1}b_{k+1}. \label{ineq:b_k}
\end{align}
 Moreover, the left ineq. in \eqref{ineq:a_k} or equivalently the right
 ineq. of \eqref{ineq:b_k} 
 holds only if $c_1^2\ge c_2$. \\
 Conversely, suppose that $(c_k)_{k\ge 1}$ is log-convex, then 
\begin{align}
 a_{k-1}a_{k+1} &\ge a_k^2, \label{ineq:a_k:convex} \\
 b_k^2 &\le \frac{k}{k+1}b_{k-1}b_{k+1} \label{ineq:b_k:convex}
\end{align}
 holds if and only if $c_1^2\le c_2$. 
\end{theorem}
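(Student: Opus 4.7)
The plan is to translate the statement into one about the PMF of a compound Poisson (CP) distribution and to establish it by induction on the Panjer-type recurrence that governs the coefficients $a_k$. For fixed $u\in(0,u_0)$, set $\lambda_u=\sum_{j\ge1}c_j u^j/j<\infty$. Substituting $tu$ for $u$ in \eqref{eq:main-relation} gives
\[
\sum_{k\ge0}(a_k u^k)\,t^k \;=\; e^{\lambda_u}\exp\!\bigl(\lambda_u(G_u(t)-1)\bigr), \qquad G_u(t)=\sum_{j\ge1}\tfrac{c_j u^j}{j\lambda_u}\,t^j,
\]
so $P(X_u=k):=e^{-\lambda_u}a_k u^k$ is the PMF of a CP variable $X_u$ with Poisson rate $\lambda_u$ and jump PMF $(c_j u^j/(j\lambda_u))_{j\ge1}$. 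Multiplication by the log-linear sequence $(u^k)$ preserves log-convexity, so \eqref{ineq:a_k:convex} is equivalent to log-convexity of the PMF of $X_u$; the same rescaling preserves log-convexity of $(c_j)$ and the condition $c_1^2\le c_2$. Inequality \eqref{ineq:b_k:convex} is mechanically equivalent to \eqref{ineq:a_k:convex} through $b_k=k!\,a_k$, the factor $k/(k+1)$ arising as $(k!)^2/((k-1)!(k+1)!)$.

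Differentiating \eqref{eq:main-relation} produces the Panjer-type recurrence $(k+1)a_{k+1}=\sum_{j=1}^{k+1}c_j a_{k+1-j}$, whence $a_0=1$, $a_1=c_1$, and $a_2=(c_1^2+c_2)/2$. Hence $a_0 a_2 - a_1^2 = (c_2-c_1^2)/2$, so \eqref{ineq:a_k:convex} at $k=1$ is equivalent to $c_1^2\le c_2$. This settles the ``only if'' direction and pins down the base case of the induction used below.

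For the ``if'' direction I would proceed by induction on $k$. Combining the two recurrences
\[
(k+1)a_{k+1}=\sum_{j\ge1}c_j a_{k+1-j}, \qquad k\,a_k=\sum_{j\ge1}c_j a_{k-j},
\]
one obtains
\[
k(k+1)\bigl(a_{k-1}a_{k+1}-a_k^2\bigr) \;=\; k\sum_j c_j a_{k-1}a_{k+1-j}\;-\;(k+1)\sum_j c_j a_k a_{k-j},
\]
which I would regroup as a double sum whose generic term has the shape $(c_{i-1}c_{j+1}-c_i c_j)\cdot(a_p a_q-a_r a_s)$ with $p+q=r+s$. Log-convexity of $(c_j)_{j\ge1}$ controls the sign of the $c$-differences, while the inductive hypothesis (in its two-index form $a_p a_q\le a_{p-1}a_{q+1}$ for $p\le q\le k-1$, a routine consequence of one-step log-convexity on $(a_\ell)_{\ell\le k-1}$) controls the sign of the $a$-brackets. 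The hard part will be exactly this combinatorial rearrangement: the weights $k$ and $k+1$ in the two recurrences are mismatched, so a direct term-by-term cancellation fails and an Abel-type or telescoping manoeuvre is required; the boundary terms produced by that rearrangement are precisely the ones that consume the assumption $c_1^2\le c_2$, in agreement with the sharpness already observed at $k=1$.
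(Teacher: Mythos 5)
Your framework coincides with the paper's: reinterpret $(a_k)$ (up to a geometric factor) as the probability mass function of a compound Poisson variable, pass to the Panjer-type recursion $(k+1)a_{k+1}=\sum_{j\ge1}c_j a_{k+1-j}$, and induct. The rescaling reductions, the base case $a_0a_2-a_1^2=(c_2-c_1^2)/2$, and the ``only if'' direction are all correct. But the inductive step --- which is the entire substance of the theorem --- is deferred as ``the hard part'', and the rearrangement you hope for does not exist in the form you describe. The quantity $k\sum_j c_j a_{k-1}a_{k+1-j}-(k+1)\sum_j c_j a_k a_{k-j}$ is \emph{linear} in the $c_j$, so no regrouping of it alone can produce terms of the shape $(c_{i-1}c_{j+1}-c_ic_j)(a_pa_q-a_ra_s)$; moreover its individual coefficients are not sign-controlled by the inductive hypothesis (the coefficient of $c_1$ is $ka_{k-1}a_k-(k+1)a_ka_{k-1}=-a_{k-1}a_k<0$). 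The missing device, used by the paper following Hansen, is to first multiply the target difference by $c_m$ (Hansen's $r_{m+1}$), write $c_m\bigl(mb_{m-1}b_{m+1}-(m+1)b_m^2\bigr)=mb_{m-1}\bigl(c_mb_{m+1}-(m+1)c_{m+1}b_m\bigr)-(m+1)b_m\bigl(c_mb_m-mc_{m+1}b_{m-1}\bigr)$, and only then expand each bracket by the two recursions; the inserted-and-cancelled $c_{m+1}$ terms are what make the quadratic differences $c_{m+1}c_{j-1}-c_mc_j$ appear. This multiplication is not cosmetic: it is the reason the structural refinement lands in $\N[\mathscr{Z}]/\N[\mathscr{X}]$ rather than $\N[\mathscr{Z}]$.

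Two further ingredients are missing even granting that identity. First, the expansion produces a boundary term (in Hansen's form $P_{m+1}(r_{m+2}P_{m+2}-r_{m+1}P_{m+3})$) that is not disposed of by the inductive hypothesis; one needs a separate lemma showing $P_{m+3}/P_{m+2}\le r_{m+2}/r_{m+1}$, whose proof uses the monotonicity of $r_{k+1}/r_k$ \emph{together with} $r_0^2\le r_1$, i.e.\ $c_1^2\le c_2$ --- so the hypothesis is consumed both in the base case and inside this auxiliary estimate, not merely ``at the boundary of the rearrangement'' as you guess. Second, your proposal never addresses the refined upper bound $a_k^2\le\frac{k+1}{k}a_{k-1}a_{k+1}$ in the log-concave half of \eqref{ineq:a_k}; that does not come out of the Hansen-type induction and requires a separate short argument (the paper's Lemma \ref{lem:log-concave}: expand $(n+1)P_{n-1}P_{n+1}-nP_n^2$ by the recursion and invoke the already-established log-concavity of $(P_n)$).
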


\begin{remark}
 We mention the relation between Theorem \ref{thm:main} and
 \cite[Theorem 1]{bender:canfield:1996}. \\
$(i)$ The extended part is the convexity results of $(a_k)_{k\ge 0}$ : 
 \eqref{ineq:a_k:convex} or equivalently \eqref{ineq:b_k:convex},
 and ``only if'' parts in the log-concave case. \\
$(ii)$ A certain kind of summability condition for $(c_k)_{k\ge 1}$ is additionally
 assumed in Theorem \ref{thm:main}.\\
$(iii)$ Log-concavity assumption of
 $(1,c_1,c_2,\ldots)$ in \cite[Theorem 1]{bender:canfield:1996} and that
 of $(c_k)_{k\ge 1}$ together with $c_1^2\ge c_2$ are the same. Since
 the latter 
 seems more clear to mention ``only if'' condition, we adopt
 this. The same argument holds in log-convex case. \\
$(iv)$ We can not recover the structural result \cite[Theorem
 2]{bender:canfield:1996} for $(b_k)$ from probabilistic proof, i.e. 
 it seems difficult to 
 prove that 
 $(n+1)b_mb_n-mb_{m-1}b_{n+1}$ for $1\le m\le n$ could be
 expressed as a polynomial in
 $\mathscr{Y}=\{c_1,c_2,\ldots\}\,\cup\,\{c_jc_k-c_{j-1}c_{k+1}:0<j\le
 k\}$ with non-negative integer coefficients. 
\end{remark}

We begin to see the probabilistic proof of Theorem \ref{thm:main}.
For $\Z_+=\{0,1,2,\ldots\}$ let $(X_i)_{i=1,2,\ldots}$ be an independent and identically distributed
sequence of $\Z_+$
-valued random variables (r.v.'s for short). Let $N$ be Poisson r.v. with parameter $\lambda>0$ of which
probability is $P(N=k)=\frac{\lambda^k}{k!}e^{-\lambda},\,k\in\Z_+$.
We consider CP r.v. $S_N:= \sum_{j=1}^N X_j$. 
Writing $P_n=P(S_N=n),\,n\in\Z_+$ and $f_n:=P(X_1=n)$, we have an expression of 
probability GF of $S_N$ by  
\begin{align}
\label{eq:mgf:cp}
 \sum_{k=0}^\infty P_ku^k =\exp\Big(
 \lambda\big( \sum_{j=0}^\infty f_ju^j-1\big)
\Big) = e^{\lambda(f_0-1)} \exp\Big(
 \sum_{j=1}^n \frac{\lambda jf_j}{j}u^j\Big),\quad |u| \le u_0. 
\end{align}
Here putting $a_k=e^{-\lambda(f_0-1)}P_k$ and  $c_k=\lambda kf_k$, the
relation \eqref{eq:main-relation} is recovered. Then, once log-concavity
(resp. log-convexity) of $(P_k)$ is proven, that of $(a_k)$
follows.  

In the probabilistic proof, the relations \eqref{eq:mgf:cp} is
not directly used and our main tool is the following well-known
recursion for $(P_k)$\footnote{See e.g. \cite[Theorem
2.2]{sundt:vernic:2009} where the proof for the case $f_0=0$ is given, but it
is almost the same proof for the case $f_0\neq 0$. See also
\cite[Theorem 3.3.9]{mikosch:2009}. The recursion
\eqref{eq:panjer:rec} is studied independently in different fields, see 
\cite[pp.59,60]{sundt:vernic:2009} for
a bit long history.}: 
\begin{align}
\label{eq:panjer:rec}
\begin{split}
 P_0 =e^{\lambda(f_0-1)}, \qquad (n+1)P_{n+1} = \sum_{k=0}^n \lambda
 (k+1) f_{k+1}P_{n-k},  
\end{split}
\end{align}
which yields two key results used in the proof. 

One is the following theorem, an explanation of which is given in
Appendix for readers' sake. 
\begin{theorem}[Hansen {\cite[Theorem 1 and 2]{hansen:1988}}]
\label{thm:hansen}
 Let $(P_n)_{n\ge 0}$ and $(f_k)_{k\ge 0}$ be connected by
 \eqref{eq:panjer:rec}. Assume $(kf_k)_{k\ge1}$ is log-concave
 $($resp. log-convex$)$, then $(P_n)$ is log-concave $($resp. log-convex$)$ if
 and only if $\lambda f_1^2-2f_2\ge 0$ $($resp. $\lambda f_1^2-2f_2\le
 0$$)$. 
\end{theorem}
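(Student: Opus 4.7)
The plan is to prove both halves of the theorem by strong induction on $n$, working straight from the Panjer recursion \eqref{eq:panjer:rec} after the cosmetic substitution $c_k:=\lambda k f_k$. In these variables the recursion reads
\[
(n+1)P_{n+1} \;=\; \sum_{k=1}^{n+1} c_k\, P_{n+1-k}, \qquad n\ge 0,
\]
the hypothesis becomes log-concavity (resp.\ log-convexity) of $(c_k)_{k\ge 1}$, and the sign condition $\lambda f_1^2 - 2f_2\ge 0$ (resp.\ $\le 0$) becomes simply $c_1^2\ge c_2$ (resp.\ $c_1^2\le c_2$). The target is to control the sign of $\Delta_n := P_n^2 - P_{n-1}P_{n+1}$ for every $n\ge 1$.

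The base case $n=1$ is a direct computation. From $P_1=c_1 P_0$ and $2P_2 = c_1 P_1 + c_2 P_0 = (c_1^2+c_2)P_0$ one reads off
\[
\Delta_1 \;=\; \tfrac12(c_1^2 - c_2)\,P_0^2 \;=\; \tfrac{\lambda}{2}\bigl(\lambda f_1^2 - 2f_2\bigr)P_0^2,
\]
which carries exactly the sign claimed in the theorem. This identity also delivers the ``only if'' direction in both cases at once: log-concavity (resp.\ log-convexity) of $(P_n)$ in particular forces $\Delta_1\ge 0$ (resp.\ $\le 0$), and the displayed formula then forces the corresponding inequality on $\lambda f_1^2 - 2f_2$.

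For the inductive step I would assume $\Delta_j$ has the correct sign for all $j\le n$ and attack $\Delta_{n+1}=P_{n+1}^2 - P_n P_{n+2}$. Writing both $(n+1)P_{n+1}$ and $(n+2)P_{n+2}$ as convolutions against $c_\bullet$ and substituting into $(n+1)(n+2)\Delta_{n+1}$ produces a double sum in the $c_i$ and the $P_a$. The plan is to regroup this double sum pairwise into two species of manifestly signed building blocks: terms of the form $(c_i c_j - c_{i-1}c_{j+1})\,P_a P_b$, whose sign is supplied by log-concavity/convexity of $(c_k)$; and terms of the form $c_i c_j\,(P_a^2 - P_{a-1}P_{a+1})$, whose sign is supplied by the inductive hypothesis. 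This is the standard ``shift-and-subtract'' device for propagating log-concavity through a convolution-type recursion.

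The chief obstacle is the combinatorial bookkeeping in this regrouping. The naive expansion yields $O(n^2)$ cross-terms $c_i c_j P_a P_b$, and one must construct an explicit involution on the index set $\{(i,j,a,b)\}$ that pairs each quadruple with $(i-1,j+1,a,b)$ or with a ``$P$-shifted'' partner so that after cancellation every surviving coefficient is a non-negative (resp.\ non-positive) integer. A second delicate point is the boundary of the summation, where the pairing leaves leftover terms that must be absorbed precisely by the base-case inequality $c_1^2\ge c_2$ (resp.\ $c_1^2\le c_2$). In the log-convex case the same scheme works with all inequalities reversed; the telescoping is preserved because log-convexity of $(c_k)$ and of the inductive $(P_k)$ contribute concordant signs inside each block.
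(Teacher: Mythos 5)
Your base case, the ``only if'' direction, and the general skeleton (strong induction through the Panjer recursion, expanding $(n+1)P_{n+1}$ and $(n+2)P_{n+2}$ as convolutions and regrouping into signed blocks) all match the route the paper takes, which is Hansen's original argument reproduced in Appendix B. But the heart of the proof is precisely the regrouping you defer to ``combinatorial bookkeeping,'' and there you have a genuine gap. The identity that actually closes the log-concave induction (the paper's \eqref{eq:relaton:log-concave}, with $r_k=\lambda(k+1)f_{k+1}$) is
\begin{align*}
m(m+2)(P_{m+1}^2-P_mP_{m+2})
= P_{m+1}(r_0P_m-P_{m+1})
+\sum_{\ell=0}^m\sum_{k=0}^{\ell}
\bigl(P_{m-\ell}P_{m-k-1}-P_{m-k}P_{m-\ell-1}\bigr)\bigl(r_{k+1}r_\ell-r_{\ell+1}r_k\bigr),
\end{align*}
whose blocks are \emph{products of a $P$-difference and an $r$-difference}, not the two separated species $(c_ic_j-c_{i-1}c_{j+1})P_aP_b$ and $c_ic_j(P_a^2-P_{a-1}P_{a+1})$ you propose. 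This matters decisively for your claim that ``the same scheme works with all inequalities reversed'': under log-convexity \emph{both} factors in each block reverse sign, so every product stays non-negative while the boundary term becomes non-positive, and the right-hand side has indeterminate sign. The log-convex half cannot be obtained by flipping inequalities in the log-concave identity; Hansen (and the paper) need a second, structurally different identity \eqref{eq:relaton:log-convex}, anchored at the top indices, which expresses $r_{m+1}(m+2)(P_{m+1}P_{m+3}-P_{m+2}^2)$ as a \emph{single} sum of blocks $(P_{m-k}P_{m+2}-P_{m+1}P_{m-k+1})(r_{m+2}r_k-r_{k+1}r_{m+1})$ that are non-negative under log-convexity. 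Your proposed two-species decomposition would sidestep this if it existed, but you have not exhibited it, and its existence is exactly the nontrivial content of the step.

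Two smaller inaccuracies: the leftover boundary terms are not ``absorbed by the base-case inequality $c_1^2\ge c_2$.'' They are $P_{m+1}(r_0P_m-P_{m+1})$ in the concave case and $P_{m+1}(r_{m+2}P_{m+2}-r_{m+1}P_{m+3})$ in the convex case, and each needs its own lemma (the paper's restatement of Hansen's Lemma~2): the first follows from monotonicity of the ratios $P_{j+1}/P_j$ supplied by the inductive hypothesis, the second from log-convexity of $(r_k)$ together with $r_0^2\le r_1$ fed back through the recursion. Finally, the induction really runs with \emph{strict} inequalities (so that the ratio arguments are available), and the non-strict statement is recovered by approximating a log-concave (resp.\ log-convex) sequence by strictly log-concave (resp.\ strictly log-convex) ones; your sketch omits this limiting step.
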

The other one is nearly log-convex result when $(P_n)$ is
log-concave. Although the result follows from a more general result
\cite[Lemma 2]{Schirmacher:1999},  
we give a short proof for convenience.  
\begin{lemma}
\label{lem:log-concave}
 Let $(P_k)_{k\ge 0}$ and $(f_k)_{k\ge 0}$ be related by
 \eqref{eq:panjer:rec}. If the assumption of log-concavity in Theorem
 \ref{thm:hansen} is satisfied, then $(P_k)$ further fulfills 
\begin{align*}
P_{n-1}P_{n+1} \le  P_n^2 \le \frac{n+1}{n}P_{n-1}P_{n+1}.
\end{align*}
\end{lemma}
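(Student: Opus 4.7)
The left inequality $P_{n-1}P_{n+1}\le P_n^2$ is exactly the log-concavity of $(P_n)$, which is already delivered by Theorem \ref{thm:hansen} under the stated hypothesis. So the only work is to establish the right-hand bound, equivalently
\[
nP_n^2 \;\le\; (n+1)P_{n-1}P_{n+1}.
\]

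My plan is to obtain this as a direct algebraic consequence of the recursion \eqref{eq:panjer:rec} combined with the log-concavity of $(P_n)$ just obtained. Setting $c_j := \lambda j f_j$, the recursion reads $(m+1)P_{m+1}=\sum_{j=1}^{m+1}c_jP_{m+1-j}$. Multiplying the instance $m=n$ by $P_{n-1}$ and the instance $m=n-1$ by $P_n$ and subtracting, the $j=1$ contributions cancel and I arrive at the identity
\[
(n+1)P_{n-1}P_{n+1}-nP_n^2 \;=\; \sum_{j=2}^{n} c_j\bigl[P_{n-1}P_{n+1-j}-P_nP_{n-j}\bigr]+c_{n+1}P_0P_{n-1}.
\]

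Every term on the right is non-negative. The coefficients $c_j$ are non-negative by assumption, the boundary term is a product of non-negative quantities, and for each $2\le j\le n$ the bracketed expression is controlled by the standard ``inner pair'' consequence of log-concavity: the four indices satisfy $n-j<n+1-j\le n-1<n$ with $(n-1)+(n+1-j)=n+(n-j)$, so concavity of $m\mapsto \log P_m$ forces $P_{n-1}P_{n+1-j}\ge P_n P_{n-j}$. Summing the identity then yields the desired right inequality.

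The one mildly delicate point, and the place I expect to slow down, is to ensure that zeros in $(P_m)$ do not spoil the inner-pair argument, which is usually phrased for strictly positive sequences. This is easy to dispose of: if some $P_{n-j}$ or $P_n$ vanishes then the corresponding bracket is automatically non-negative, while $P_{n-1}=0$ combined with log-concavity and $P_0>0$ reduces the claim to trivialities. Beyond this minor bookkeeping the proof uses nothing about $(c_j)$ except non-negativity; all of the log-concavity structure is absorbed into Theorem \ref{thm:hansen}.
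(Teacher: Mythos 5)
Your proof is correct and is essentially the paper's own argument: the paper derives exactly the same identity $(n+1)P_{n-1}P_{n+1}-nP_n^2=\lambda(n+1)f_{n+1}P_0P_{n-1}+\sum_{k=0}^{n-1}\lambda(k+1)f_{k+1}(P_{n-k}P_{n-1}-P_{n-1-k}P_n)$ from \eqref{eq:panjer:rec} (your version is the same up to the reindexing $c_j=\lambda jf_j$, $j=k+1$) and concludes by the same inner-pair consequence of log-concavity of $(P_n)$. Your extra care about zeros is a minor refinement the paper leaves implicit.
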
 

\begin{proof}
 Due to the relation \eqref{eq:panjer:rec}, we have 
 \begin{align*}
& (n+1)P_{n-1}P_{n+1}-nP_n^2 \\
&= \lambda(n+1) f_{n+1} P_0P_{n-1} +\sum_{k=0}^{n-1} \lambda(k+1)f_{k+1} 
(P_{n-k}P_{n-1}-P_{n-1-k}P_n). 
 \end{align*}
Since $(P_n)_{n\ge 0}$ is a log-concave sequence, the right-hand side is non-negative.
\end{proof}

\begin{proof}[Proof of Theorem \ref{thm:main}]
 By the assumption \eqref{eq:main-relation} is well-defined for
 $u\in[0,u_0]$. We notice that assumptions and results are unchanged if
 we multiply $(a_k,b_k,c_k)$ by $u_0^k$ and consider sequences
 $a_k'=a_ku_0^k,\,b_k'=b_ku_0^k,\,c_k'=c_ku_0^k$ such that they satisfy
 \eqref{eq:main-relation} with $u\in[0,1]$. Therefore, without loss of
 generality we may assume that $(c_k/k)_{k\ge 1}$ are infinitely
 summable. 
 We write the relation \eqref{eq:main-relation} in the form of \eqref{eq:mgf:cp}.
 Noticing $\sum_{k=1}^\infty (c_k/k)<\infty$, we take some 
 $\lambda > \sum_{k=1}^\infty c_k/k$ since $\lambda$ could
 be any positive constant. Let $c_k=\lambda f_k
 k,\,k\ge1$ in \eqref{eq:main-relation} and put 
 $f_0=1-\sum_{k=1}^\infty f_k$. Then multiply
 both sides by $e^{\lambda(f_0-1)}$ to obtain 
 \[
  \sum_{k=0}^\infty e^{\lambda(f_0-1)}a_k u^k
 =e^{\lambda(f_0-1)}\exp\Big(
 \sum_{j=1}^\infty \frac{\lambda j f_j}{j}u^j 
\Big). 
 \]
Since the right-hand side is probability GF of $P_k$ by
 the uniqueness $P_k=e^{\lambda(f_0-1)}a_k$ holds.  

 Now we check conditions of Theorem \ref{thm:hansen} in terms of $(c_k)$. The
 log-concavity (resp. log-convexity) of $(c_k)_{k\ge1}$ and that of
 $(kf_k)_{k\ge1}$ are equivalent. Moreover $\lambda f_1^2-2f_2 \ge
 1/\lambda \cdot (
 c_1^2-c_2\cdot 1) \ge 0$. Thus conditions follow from those of Theorem
 \ref{thm:main}. Then conclusions of Theorem
 \ref{thm:main} 
 are implied by 
 \[
 a_k^2-a_{k-1}a_{k+1} = e^{-2\lambda(f_0-1)}(P_k^2-P_{k-1}P_{k+1})
\]
 together with results of Theorem \ref{thm:hansen}. The second inequality of \eqref{ineq:a_k} is implied by Lemma
 \ref{lem:log-concave}. Finally \eqref{ineq:b_k} follows from
 \eqref{ineq:a_k}.
\end{proof}

We give a remark about further relations between CP distribution and the
cycle index. 

\begin{remark}
\label{rem:relation}
 (i) The recursion \eqref{eq:panjer:rec} is equivalent to that for the cycle
 index polynomials $A(\Sigma_n)$ of symmetric group $\Sigma_n$ with
 variables $(x_1,x_2,\ldots,x_n)$: 
 \begin{align}\label{recursion:sym}
  A(\Sigma_0)=1\quad \mathrm{and}\quad A(\Sigma_n)= \frac{1}{n} \sum_{\ell=1}^n x_{\ell}A(\Sigma_{n-\ell})
 \end{align}
 (see e.g. Harary and Palmer \cite[p.120]{harary:palmer:1973}). Namely,
 scale-adjusted $A(\Sigma_{n+1})$ has a correspondence with $P_{n}$. This is rationalized by
 comparing their GFs.
 The GF of 
 $A(\Sigma_n)$ is given by \eqref{eq:main-relation}, i.e. 
 \begin{align} \label{eg:gene:sym}
  \sum_{n=0}^\infty A(\Sigma_n) t^n = \exp\big( \sum_{j\ge1}
  \frac{x_j}{j} t^j\big) 
  \end{align}
  (see e.g. \cite[Theorem 4.7.2]{wilf:1994} for an elementary
 proof). Assume that $\sum_{j\ge 1} x_j/j<\infty$. 
 Let $\lambda=\frac{1}{1-f_0}\sum_{k=1}^\infty x_k/k,\,f_k=x_x/(\lambda
 k),\,k\ge 1$ and $f_0=1-\sum_{k=1}^\infty f_k$ as before
 and then \eqref{eg:gene:sym}
 coincides with \eqref{eq:mgf:cp}. Hence by the uniqueness of GFs we
 conclude that 
 for any $A(\Sigma_{n+1})$ there exists probability mass
 function $P_n$, and stated properties of $P_n$ (Theorem
 \ref{thm:hansen} and Lemma \ref{lem:log-concave})
 hold true for $A(\Sigma_n)$. \\
(ii) Since CP distribution coincides with ID distributions on $\Z_+$
 (see \cite[Theorem 3.2, III]{steutel:2004} ), $A(\Sigma_n)$ is relevant to ID distributions. Indeed, it is
 known that a distribution $P_k,\,k\in\Z_+$ with $P_0>0$ is ID if and
 only if the quantity $r_k$ with $k\in Z_+$ determined by 
 \begin{align}
\label{recur:id}
 P_{n+1} = \frac{1}{n+1} \sum_{k=0}^n r_k P_{n-k}
 \end{align} 
 are non-negative (see \cite[Theorem 4.4, II]{steutel:2004}  or
 \cite[Corollary 51.2]{sato:1999}). In view of \eqref{recur:id} and
 \eqref{recursion:sym}, one see the correspondence between the cycle index of
 symmetric group 
 and ID distributions. In each topic there are established
 properties. Thus further investigation of the relation would be our next
 interest.  
\end{remark}
Next we consider a combinatorial proof of the extended part based on the
cycle index polynomials as done in \cite{bender:canfield:1996}. 
Let
$\Sigma_m$ denote the symmetric group of degree $m$ and let $N_j(\sigma)$ be the
number of $j$-cycle in the permutation $\sigma$. Then the cycle index
polynomials $(a_m)$ and related polynomials $(b_m)$ are defined as  
\[
 a_m(c_1,c_2,\ldots,c_m)=\frac{1}{m!} b_m(c_1,c_2,\ldots,c_m) =
 \frac{1}{m!}\sum_{\sigma\in \Sigma_m} wt(\sigma), 
\]
where $wt(\sigma)=c_1^{N_1(\sigma)}\cdots c_m^{N_m(\sigma)}$. 
In what follows, we give several properties as derived in
\cite{bender:canfield:1996} which are used. 
For $\sigma_1\in \Sigma_{m+1}$ let
$\sigma_1'$ be $\sigma_1$ with $m+1$th element deleted from the cycle
containing it. The summation of $wt(\sigma_1')$ over all $\sigma_1\in
\Sigma_{m+1}$ yields 
\begin{align}
 \sum_{\sigma_1\in \Sigma_{m+1}} wt (\sigma_1') =(m+1) b_m. \label{eq:prop2}
\end{align}
If $m+1$ element belongs to a $j$-cycle of $\sigma_1$,
then 
\begin{align}
 c_{j-1}wt(\sigma_1)= c_j wt(\sigma_1') \label{eq:prop22}
\end{align} 
holds. 
We have two formulas for $b_{m+1}$ and $(m+1)b_m$: let $(m)_
k$ denote the falling factorial $m(m-1)\cdots (m-k+1)$, then 
\begin{align}
\label{eq:prop3}
 b_{m+1}&=\sum_{j=1}^{m+1}(m)_{j-1} c_j b_{m+1-j}, \\ 
 (m+1)b_{m} &= \sum_{j=1}^{m+1} (m)_{j-1} c_{j-1} b_{m+1-j},\label{eq:prop4}
\end{align}
both of which have combinatorial interpretations. 
For \eqref{eq:prop3}, $j$th term in the sum, $(m)_{j-1}c_j b_{m+1-j}$
implies the sum of $wt(\sigma_1)$ over all combinations in $\sigma_1$ such
that $j$-cycle contains $m+1$th element. There are $(m)_{j-1}$ ways to
construct $j$-cycle which contains $m+1$th element and $b_{m+1-j}$ is the
sum of weights over all permutations for remaining $m+1-j$ elements. Here
$c_j$ is the weight of $j$-cycle. For \eqref{eq:prop4}, $j$th term
implies the sum of $wt(\sigma_1')$ over all permutations in $\sigma_1$
such that $m+1$th element is removed from $j$-cycle of $\sigma_1$, so
that $c_j$ of $(m)_{j-1}c_j b_{m+1-j}$ in \eqref{eq:prop3} is replaced by
$c_{j-1}$. Here we additionally use \eqref{eq:prop2}. 

Now we give a combinatorial proof of the extended part. We get the idea
from the formula (5) by \cite{hansen:1988}, though an analogue of the
formula has already been used in Bruijn and Erd\"os
\cite{bruijn:erdos:1953}. 

\begin{proof}
 Our goal is to prove \eqref{ineq:b_k:convex} by the induction, and then \eqref{ineq:a_k:convex} follows by the equivalence. 
 Since $b_0=1,\,b_1=c_1$ and $b_2=c_1^2+c_2$, we
 have $b_0b_2-2b_1^2=c_2-c_1^2\ge 0$. Assume that
 \eqref{ineq:b_k:convex} holds with $k=m-1$ and then we consider  
\begin{align*}
& c_m(mb_{m-1}b_{m+1}-(m+1)b_m^2) \\
&= mb_{m-1}(c_mb_{m+1}-c_{m+1}(m+1)b_m)-(m+1)b_m(c_mb_m-c_{m+1}mb_{m-1}).
\end{align*}
From the properties \eqref{eq:prop3} and  \eqref{eq:prop4} 
that 
\begin{align*}
& mb_{m-1}\big(
 \sum_{j=1}^{m+1} c_{m} (m)_{j-1} c_j b_{m+1-j}- c_{m+1}(m)_{j-1}c_{j-1}b_{m+1-j}
\big) \\
&\quad -(m+1)b_m \big(
\sum_{j=1}^m c_m(m-1)_{j-1}c_j b_{m-j}-c_{m+1}(m-1)_{j-1}c_{j-1}b_{m-j}
\big) \\
&= mb_{m-1}
\sum_{j=1}^m (m)_{j-1}b_{m+1-j}(c_mc_j-c_{m+1}c_{j-1})  \\
&\quad - mb_m 
\sum_{j=1}^m (m-1)_{j-1}b_{m-j} (c_mc_j-c_{m+1}c_{j-1})
+ b_m \sum_{j=1}^m (m-1)_{j-1}b_{m-j}(c_{m+1}c_{j-1}-c_mc_j) \\
& =m\sum_{j=1}^m (c_{m+1}c_{j-1}-c_mc_j) (m-1)_{j-2} \big\{
(m+1-j)b_m b_{m-j}-mb_{m-1}b_{m+1-j}
\big\} \\
&\quad + b_m \sum_{j=1}^m (m-1)_{j-1} b_{m-j}(c_{m+1}c_{j-1}-c_mc_j).
\end{align*} 
Now since 
\eqref{ineq:b_k:convex} 
holds for $k=1,\ldots,m-1$ by
 the induction hypothesis, 
  one could see from the last expression that \eqref{ineq:b_k:convex} is satisfied
 also with $m$. 
\end{proof}

By combinatorial approach, we can prove further results more than 
\eqref{ineq:a_k}--\eqref{ineq:b_k:convex}. The following is an
extension of Theorem 2 in \cite{bender:canfield:1996} which is related
with the log-convexity result in Theorem \ref{thm:main}. 

\begin{theorem}
 \label{thm-main2}
 Let $c_0=1$ and let $c_1,c_2,\ldots$ be indeterminates. Further let 
 \begin{align*}
 & \mathscr{X}= \{c_1,c_2,\ldots\}, \\
 & \mathscr{Y}=\mathscr{X}\cup \{c_jc_k-c_{j-1}c_{k+1} : 0<j \le k\}, \\
 &  \mathscr{Z}= \mathscr{X}\cup \{c_{j-1}c_{k+1}-c_jc_k : 0<j\le k\}
 \end{align*}
and define the sequence $(b_k)_{k\ge 0}$ by \eqref{eq:main-relation}. 
 Then 
 \begin{align}
   & (n+1)b_mb_n -m b_{m-1}b_{n+1} \in \N[\mathscr{Y} ], \label{poly1} \\
   & m b_{m-1}b_{n+1} -(n+1) b_mb_n \in \N[\mathscr{Z}]/\N[\mathscr{X}] \label{poly2}
  \quad \mathrm{for}\quad 1\le m \le n,
 \end{align}
 namely, $(n+1)b_mb_n-mb_{m-1}b_{n+1}$ can be expressed as a polynomial in
 $\mathscr Y$ with non-negative integer coefficients and
 $mb_{m-1}b_{n+1}-(n+1)b_mb_n$ can be expressed as a ratio of 
 polynomials in $\mathscr{Z}$ and those in $\mathscr{X}$ with
 respectively
 non-negative integer coefficients.  
\end{theorem}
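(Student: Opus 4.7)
The plan is to prove both \eqref{poly1} and \eqref{poly2} by a joint induction on the pair $(m, n)$ with $1 \le m \le n$, extending the combinatorial manipulation used in the preceding diagonal-case proof (the special case $m = n$ of \eqref{poly2}). Since \eqref{poly1} is a polynomial identity in the indeterminates $c_1, c_2, \ldots$ independent of any sign hypothesis, it coincides with Bender--Canfield's Theorem 2; I would invoke that result directly, or re-derive it by the same template sketched below for \eqref{poly2} with the roles of $\mathscr{Y}$ and $\mathscr{Z}$ exchanged. The substantive content is therefore \eqref{poly2}.

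For \eqref{poly2} I will exploit the bilinear identity
\begin{equation*}
c_m\bigl( m b_{m-1} b_{n+1} - (n+1) b_m b_n \bigr) = m b_{m-1}\bigl[c_m b_{n+1} - c_{m+1}(n+1) b_n\bigr] - (n+1) b_n\bigl[c_m b_m - c_{m+1} m b_{m-1}\bigr],
\end{equation*}
which is verified by direct cancellation of the cross term $c_{m+1}(n+1)m b_{m-1}b_n$. Applying \eqref{eq:prop3} and \eqref{eq:prop4} to each bracket rewrites it as $\sum_j (\ldots) b_{\ldots}(c_m c_j - c_{m+1} c_{j-1})$, and the basic factor $c_m c_j - c_{m+1} c_{j-1}$ lies in $-\mathscr{Z}$ for $1 \le j \le m$ (parametrized by $J = j, K = m$ in the definition of $\mathscr{Z}$), vanishes at $j = m+1$, and lies in $+\mathscr{Z}$ for $m+2 \le j \le n+1$ (parametrized by $J = m+1, K = j-1$). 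Regrouping by $\mathscr{Z}$-generator, splitting $(n+1) b_n = n b_n + b_n$ as in the preceding diagonal proof, and collecting the unmatched tail $j \ge m+2$ (which is already in $\mathbb{N}[\mathscr{Z}]$) produces an expression in which each remaining coefficient of a $\mathscr{Z}$-generator is either an element of $\mathbb{N}[\mathscr{X}]$ or reduces to a smaller instance $(n'+1) b_{m'} b_{n'} - m' b_{m'-1} b_{n'+1}$ with $1 \le m' \le n'$ and $m' + n' < m + n$. The induction hypothesis then places the right-hand side in $\mathbb{N}[\mathscr{Z}]/\mathbb{N}[\mathscr{X}]$, and division by $c_m \in \mathbb{N}[\mathscr{X}]$ gives \eqref{poly2}.

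The main obstacle will be the reduction of the $2 \le j \le m$ coefficients to inductive subterms. In the diagonal case $m = n$, the identities $(n)_{j-1} = m (m-1)_{j-2}$ and $(m-1)_{j-1} = (m-j+1)(m-1)_{j-2}$ permit a uniform extraction of $m(m-1)_{j-2}$, producing the subterm $\{(m+1-j) b_m b_{m-j} - m b_{m-1} b_{m+1-j}\}$, which matches the inductive form at $(m', n') = (m-j+1, m-1)$ with $m' \le n'$ iff $j \ge 2$. For off-diagonal $m < n$, the falling factorials $(n)_{j-1}$ and $(m-1)_{j-1}$ no longer share a common leading factor, so the term-by-term factoring must be replaced by a more delicate regrouping, perhaps invoking the Bruijn--Erd\H{o}s-style identity mentioned in the text. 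Establishing this regrouping and verifying that each resulting subterm lies in the forward range $1 \le m' \le n'$ with $m' + n' < m + n$ is the technical heart of the proof; once available, the assembly into $\mathbb{N}[\mathscr{Z}]/\mathbb{N}[\mathscr{X}]$ and the final division by $c_m$ proceed routinely.
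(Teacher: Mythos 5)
Your proposal is not complete, and the missing step is precisely the one the paper's proof is designed to avoid. You set up the bilinear identity correctly and correctly classify the factors $c_mc_j-c_{m+1}c_{j-1}$ into $\pm\mathscr{Z}$, but you then concede that for $m<n$ the falling factorials $(n)_{j-1}$ and $(m-1)_{j-1}$ no longer admit the common extraction $m(m-1)_{j-2}$ that drives the diagonal case, and you leave the required ``more delicate regrouping'' unestablished while calling it the technical heart of the proof. That is a genuine gap: nothing in the proposal verifies that the residual coefficients reduce to inductive subterms in the forward range, and it is far from clear that such a direct off-diagonal regrouping exists --- the author's closing remark (that membership of $mb_{m-1}b_{n+1}-(n+1)b_mb_n$ in $\N[\mathscr{Z}]$ without denominators is conjectured to fail) is a warning sign that a term-by-term off-diagonal expansion of this kind is problematic.

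The paper circumvents the off-diagonal expansion entirely. It runs the induction only on the diagonal quantity $kb_{k-1}b_{k+1}-(k+1)b_k^2$, for which the identity from the combinatorial proof of Theorem \ref{thm:main} (with exactly the clean factorization you identified) applies. The general case $1\le k\le\ell$ is then deduced from the diagonal cases by telescoping ratios:
\[
\frac{b_{\ell+1}}{(\ell+1)b_\ell}-\frac{b_k}{kb_{k-1}}
=\sum_{i=k}^{\ell}\Big(\frac{b_{i+1}}{(i+1)b_i}-\frac{b_i}{ib_{i-1}}\Big),
\]
where each summand is a diagonal expression divided by $b$'s lying in $\N[\mathscr{X}]$ by the recursion \eqref{eq:prop3}; multiplying back by $(\ell+1)b_\ell\cdot kb_{k-1}$ yields \eqref{eq:induction0}. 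This is exactly where the $\N[\mathscr{X}]$ denominator in the statement of \eqref{poly2} originates, and it also supplies the off-diagonal instances needed as inputs to the diagonal induction step. To salvage your plan, replace the unproven regrouping by this telescoping reduction; as written, the proposal does not constitute a proof.
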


 Note that proof for \eqref{poly1} is given in
 \cite{bender:canfield:1996} and further extended by
 \cite{Schirmacher:1999}, where $b_mb_n-b_{m-1}b_{n+1}\in
 \N[\mathscr{Y}]$ is shown. Therefore we give a proof only for
 \eqref{poly2}.

\begin{proof}
 We show \eqref{poly2} by the induction. It is immediate to see 
 \[
  b_0b_2-2b_1^2 = c_2-c_1^2 \in \N[\mathscr{X}].
 \]
 Assume that 
 \begin{align}
 \label{eq:induction}
  kb_{k-1} b_{k+1} - (k+1)b_k^2 \in \N[\mathscr{Z}]/\N[\mathscr{X}]
 \end{align}
 holds for $k\le m-1$, then 
 \begin{align}\label{eq:induction0}
  kb_{k-1} b_{\ell+1}-(\ell+1)b_kb_{\ell}\in
  \N[\mathscr{Z}] /\N[\mathscr{X}] 
 \end{align}
 holds for all $1\le k\le \ell \le m-1$. Indeed, we observe that 
 \begin{align}
\label{eq:induction1}
  \frac{b_{\ell+1}}{(\ell+1)b_{\ell}} - \frac{b_k}{kb_{k-1}} = \Big(
 \frac{b_{\ell+1}}{(\ell+1)b_{\ell}} - \frac{b_{\ell}}{\ell b_{\ell-1}} 
 \Big) 
 +\cdots + \Big(
 \frac{b_{k+1}}{(k+1)b_k}- \frac{b_k}{kb_{k-1}}
 \Big)
 \end{align}
is included in $\N[\mathscr{Z}]/\N[\mathscr{X}]$ by the induction
 hypothesis. Here we also use $b_k \in \N[\mathscr{X}],\,k\le m-1$ which
 follows from the recursion \eqref{eq:prop3}.
 Then we multiply
 \eqref{eq:induction1} by $(\ell+1)b_\ell\cdot kb_{k-1}$ to conclude
 \eqref{eq:induction0}.  Now we recall the previous equality in the proof
 of Theorem \eqref{thm:main} : 
 \begin{align*}
  & c_m ( mb_{m-1}b_{m+1}-(m+1)b_m^2) \\
  & = m\sum_{j=1}^m (m-1)_{j-2}(c_{m+1}c_{j-1}-c_mc_j)\big\{
 (m+1-j)b_{m-j}b_m -m b_{m-1}b_{m+1-j}
 \big\} \\
  & + b_m \sum_{j=1}^m(m-1)_{j-1} b_{m-j}(c_{m+1}c_{j-1}-c_mc_j). 
 \end{align*}
Then since \eqref{eq:induction0} holds for $1\le k \le \ell\le m-1$ by
 the assumption we conclude \eqref{eq:induction} with $k=m$. 
\end{proof}

\begin{remark}
 We do not know whether $mb_{m-1}b_{n+1}-(n+1)b_mb_n \in
 \N[\mathscr{Z}],\,1\le m\le n$ holds or not. Our conjecture is negative but we do not
 prove it. 
\end{remark}

\section{Convolution and binomial convolution of cycle index
 polynomials}\label{sec:convo}


We first review a generalization of the Bender and Canfield exponential
formula (Theorem 2 in \cite{bender:canfield:1996}) to convoluted
sequences which is done by Schirmacher \cite{Schirmacher:1999}. At there
we give resulting consequences of the
extension together with its alternative proof. Then we investigate the
``log-convex counter part''. These generalizations to normal and binomial
convolutions may provide interesting examples of log-concave/convex
sequences which are not included in known laws. 

\begin{theorem}[Theorem 3 in \cite{Schirmacher:1999}]
\label{thm:conv:logconcave}
 For $i=1,\ldots,w,\,w\in \N$ let $c_{i,0}=1$ and let
 $c_{i,1},c_{i,2},\ldots$ be indeterminates. Let 
\[
 \mathscr{Y}_i = \{c_{i,1},c_{i,2},\ldots\} \cup
 \{c_{i,j}c_{i,k}-c_{i,j-1}c_{i,k+1}: 0<j \le k \}
\]
and let 
\begin{align}
\label{eq:convexp}
 \sum_{k=0}^\infty A_k u^k = \sum_{k=0}^\infty \frac{B_k u^k}{k!}
 =\exp\Big(
 \sum_{j=0}^\infty \frac{\sum_{i=1}^w c_{i,j}}{j}u^j
\Big).
\end{align}
Then for $1 \le m \le n$,  
\begin{align}
 \label{eq:conv-ccv1}
 A_mA_n-A_{m-1}A_{n+1} & \in \R_+ [\cup_{i=1}^w \mathscr{Y}_i], \\ 
 \label{eq:conv-ccv2}
 (n+1) A_{m-1}A_{n+1} -mA_mA_n & \in  \R_+ [\cup_{i=1}^w \mathscr{Y}_i].
\end{align}
 Namely $A_mA_n-A_{m-1}A_{n+1}$ are polynomials in $\cup_{i=1}^w
 \mathscr{Y}_i$ with non-negative rational coefficients and so are
 $(n+1) A_{m-1}A_{n+1} -mA_mA_n$. 
 The relations \eqref{eq:conv-ccv1} and \eqref{eq:conv-ccv2} are
 equivalent to 
 \begin{align}
 \label{eq:conv-ccv3}
 (n+1)B_mB_n-mB_{m-1}B_{n+1} & \in \R_+ [\cup_{i=1}^w \mathscr{Y}_i],  \\ 
 \label{eq:conv-ccv4}
  B_{m-1}B_{n+1} -B_mB_n & \in  \R_+ [\cup_{i=1}^w \mathscr{Y}_i].
\end{align}
\end{theorem}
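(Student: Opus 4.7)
The plan is to adapt the combinatorial proof of Theorem \ref{thm-main2} to the multi-factor, ``colored'' setting of \eqref{eq:convexp}. First I would record the natural combinatorial interpretation
\[
B_n = \sum_{(\sigma,\phi)} \prod_{C\in\mathrm{cycles}(\sigma)} c_{\phi(C),|C|},
\]
where the sum is over pairs of a permutation $\sigma\in\Sigma_n$ together with a coloring $\phi$ of its cycles by $[w]$; this follows directly from the product structure of the exponential generating function. I would then derive the colored analogues of \eqref{eq:prop3} and \eqref{eq:prop4} by conditioning on the color $i$ and length $j$ of the cycle of $\sigma$ that contains the marked element $m+1$:
\[
B_{m+1}=\sum_{i=1}^{w}\sum_{j=1}^{m+1}(m)_{j-1}c_{i,j}B_{m+1-j}, \qquad (m+1)B_m=\sum_{i=1}^{w}\sum_{j=1}^{m+1}(m)_{j-1}c_{i,j-1}B_{m+1-j}.
\]

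Next I would run the proof of Theorem \ref{thm-main2} almost verbatim, but keeping the color $i_0$ of the tracked cycle as a free parameter rather than summing it away. The crucial product that appears in the Bender--Canfield manipulation then specializes to $c_{i_0,j}c_{i_0,m}-c_{i_0,j-1}c_{i_0,m+1}$, an element of $\mathscr{Y}_{i_0}$, rather than a mixed-color expression. Summing over $i_0\in\{1,\dots,w\}$ at the end assembles these contributions into an expression in $\R_+[\cup_i\mathscr{Y}_i]$, and induction on $m$ together with the telescoping identity \eqref{eq:induction1} promotes the diagonal estimate to the whole range $1\le m\le n$, closing the argument for \eqref{eq:conv-ccv1}. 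The near log-convex inequality \eqref{eq:conv-ccv2} is obtained in parallel, using the single-sequence Schirmacher bound $(n+1)A_{m-1}^{(i)}A_{n+1}^{(i)}-mA_m^{(i)}A_n^{(i)}\in\R_+[\mathscr{Y}_i]$ at each step in place of the log-concavity direction. Finally \eqref{eq:conv-ccv3}--\eqref{eq:conv-ccv4} follow from \eqref{eq:conv-ccv1}--\eqref{eq:conv-ccv2} via the relation $B_k=k!\,A_k$, exactly as in the single-sequence case.

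The main obstacle is the color bookkeeping in the key manipulation. The naive substitution $c_{i,j}\mapsto\tilde c_j=\sum_i c_{i,j}$ into the proof of Theorem \ref{thm-main2} introduces cross-color products $c_{i,j}c_{i',m}$ with $i\ne i'$ in the critical difference factors $\tilde c_j\tilde c_m-\tilde c_{j-1}\tilde c_{m+1}$, and such products are not elements of $\cup_i\mathscr{Y}_i$. The combinatorial argument must therefore arrange that the cycle whose length changes in each swap/deletion step is the \emph{same} cycle throughout, so that a single fixed color $i_0$ is carried across the expression and each resulting difference factor lands in a single $\mathscr{Y}_{i_0}$. Verifying this color invariance is the delicate part of the proof; an alternative route by induction on $w$ via convolution would require a symmetrization over the two convolved sequences that appears to be no simpler.
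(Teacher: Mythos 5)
Your plan is not the paper's route, and as it stands it has a genuine gap at exactly the point you flag. The paper does not colorize the Bender--Canfield combinatorics at all: it observes that $A_k$ is the $w$-fold \emph{ordinary convolution} of the single-sequence cycle-index polynomials $a_{i,k}$ (since the exponential of the sum factors into a product of ordinary generating functions), reduces to the bivariate case by induction on $w$, and then invokes the explicit convolution identity of Lemma \ref{lem:dmdn}, which exhibits $D_mD_n-D_{m-1}D_{n+1}$ as a manifestly non-negative combination of the terms $x_kx_\ell$, $y_ky_\ell$, $x_kx_\ell-x_{k-1}x_{\ell+1}$ and $y_ky_\ell-y_{k-1}y_{\ell+1}$; combined with the single-sequence result $a_{i,m}a_{i,n}-a_{i,m-1}a_{i,n+1}\in\R_+[\mathscr{Y}_i]$ this gives \eqref{eq:conv-ccv1} with no color bookkeeping whatsoever, and \eqref{eq:conv-ccv2} is then quoted from Schirmacher's Lemma 2. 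You dismiss this convolution route as ``no simpler,'' but it is precisely the argument that closes the proof, and it avoids your obstruction entirely.

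The gap in your route is that you state the obstruction (cross-color products $c_{i,j}c_{i',k}$ with $i\ne i'$ appearing in the critical differences) and then assert that ``the combinatorial argument must arrange that the cycle whose length changes in each swap/deletion step is the same cycle throughout,'' admitting that verifying this is ``the delicate part'' --- but that verification is the entire content of the claim, and I do not see how it can succeed: in the key manipulation of Theorem \ref{thm-main2} the difference $c_{m+1}c_{j-1}-c_mc_j$ pairs the cycle containing the marked element (index $j$) with an \emph{external} multiplier $c_m$ introduced to clear denominators, and in Bender--Canfield's Theorem 2 the differences $c_jc_k-c_{j-1}c_{k+1}$ pair two cycles living in two \emph{different} permutations of the pair contributing to $b_mb_n$ versus $b_{m-1}b_{n+1}$. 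In both cases the two indices carry independent colors, so fixing a single $i_0$ does not make the mixed terms disappear. There is also a concrete error in your colored analogue of \eqref{eq:prop4}: since a fixed point containing the marked element can carry any of $w$ colors while its deletion contributes weight $1$, the correct identity is $(m+w)B_m=\sum_{i}\sum_{j}(m)_{j-1}c_{i,j-1}B_{m+1-j}$ with $c_{i,0}=1$, not $(m+1)B_m$; the discrepancy $(w-1)B_m$ breaks the cancellation that the single-sequence argument relies on. The reduction of \eqref{eq:conv-ccv3}--\eqref{eq:conv-ccv4} to \eqref{eq:conv-ccv1}--\eqref{eq:conv-ccv2} via $B_k=k!A_k$ is fine.
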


The formula \eqref{eq:convexp} implies that $(A_k)_{k\ge0}$ and
$(B_k)_{k\ge 0}$ are normal and binomial convolutions
respectively (\cite[Chapters 2.2 and 2.3]{wilf:1994}).

In \cite{Schirmacher:1999} a general proof based on Cauchy-Binet
formula was just suggested (see also \cite{stanley:1989}) and
explicit expressions were skipped. In order to see clearly the reason why
log-concavity is preserved and log-convexity is not in the normal convolution
we give a direct proof. 
The following lemma is
crucial of which proof is given in Appendix.  

\begin{lemma}
\label{lem:dmdn}
 Let $(x_k)_{k\ge0}$ and $(y_k)_{k\ge0}$ are indeterminates. Let 
 $D_n=\sum_{k=0}^n x_k y_{n-k}$, then for $1\le m\le n$, 
  \begin{align}
\label{eq:dmdn}
 \begin{split}
 & D_mD_n-D_{m-1}D_{n+1} \\
 & = x_0y_0x_my_n+ y_0 \sum_{k=0}^{n-m-1} x_m x_{k+1} y_{n-1-k} 
 +y_0
  \sum_{k=0}^{m-1} y_k (x_mx_{n-k}-x_{n+1}x_{m-k-1}) \\
 &\ + \sum_{k=0}^{m-1} \sum_{\ell=0}^{n-m} x_k
  x_{\ell}(y_{m-k}y_{n-\ell}-y_{m-k-1}y_{n+1-\ell}) \\
 &\ + \sum_{k=0}^{m-1} \sum_{\ell=0}^k (x_kx_{n-m+1+\ell}-x_\ell
  x_{n-m+1+k})(y_{m-k}y_{m-1-\ell}-y_{m-1-k}y_{m-\ell}). 
 \end{split}
 \end{align}
 Moreover, putting 
 \begin{align*}
  \mathscr{G} &=\{x_0,x_1,\ldots,x_n\}\cup \{x_mx_n-x_{m-1}x_{n+1}:1\le
  m\le n\}, \\
  \mathscr{F} &= \{y_0,y_1,\ldots,y_n\}\cup \{y_my_n-y_{m-1}y_{n+1}:1\le
  m\le n\},
 \end{align*}
 we obtain 
 \[
  D_mD_n-D_{m-1}D_{n+1} \in \N[\mathscr{G}\cup \mathscr{F}],\quad 1\le
 m\le n.
 \]
\end{lemma}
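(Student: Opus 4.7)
The plan is to prove \eqref{eq:dmdn} by direct expansion of the left-hand side and a careful reorganisation of the resulting double sum into the five pieces on the right; once the identity is in place, the membership $D_mD_n-D_{m-1}D_{n+1}\in\N[\mathscr{G}\cup\mathscr{F}]$ follows by inspecting each piece separately.

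First I expand
\[
D_mD_n = \sum_{j=0}^m\sum_{k=0}^n x_j x_k y_{m-j}y_{n-k}, \qquad D_{m-1}D_{n+1} = \sum_{j=0}^{m-1}\sum_{k=0}^{n+1} x_j x_k y_{m-1-j}y_{n+1-k},
\]
and peel off the boundary slices $j=m$ in the first sum and $k=n+1$ in the second, whose combined contribution to the difference is the residue $y_0\bigl(x_m D_n - x_{n+1}D_{m-1}\bigr)$. On the remaining rectangle $0\le j\le m-1,\,0\le k\le n$ the two summands combine into the $y$-difference $y_{m-j}y_{n-k}-y_{m-1-j}y_{n+1-k}$. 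I split this rectangle by the line $k=n-m+j$. On the side $k\le n-m+j$ (equivalently $m-j\le n-k$) the bracket is a canonical $\mathscr{F}$-generator, and the $j$-independent part $k\le n-m$ becomes piece $4$ after the relabelling $(j,k)\mapsto(k,\ell)$. On the complementary side the terms are paired by the natural involution that swaps the two $x$-indices $(j,k')\leftrightarrow(k'-(n-m+1),\,j+(n-m+1))$, which preserves the anti-diagonal $j+k'=\mathrm{const}$; each matched pair combines with its counterpart from $D_{m-1}D_{n+1}$ into a product of one $x$-type and one $y$-type log-concavity difference, producing piece $5$ over the range $0\le\ell\le k\le m-1$.

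The residue $y_0(x_m D_n-x_{n+1}D_{m-1})$ breaks up according to where the corresponding anti-diagonal lies. The $k=0$ term of $x_m y_0 D_n$ is $x_0 y_0 x_m y_n$ (piece $1$); the slice $1\le k\le n-m$ gives $y_0\sum_{k=0}^{n-m-1}x_m x_{k+1}y_{n-1-k}$ after the substitution $k\mapsto k+1$ (piece $2$); and the slice $n-m+1\le k\le n$, re-indexed via $k\mapsto n-k$ and combined with $-x_{n+1}y_0 D_{m-1}$ re-indexed via $j\mapsto m-1-j$, collapses into $y_0\sum_{k=0}^{m-1}y_k\bigl(x_mx_{n-k}-x_{n+1}x_{m-k-1}\bigr)$, which is piece $3$.

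The main obstacle will be the combinatorial bookkeeping: keeping the index ranges consistent and verifying that the $x$- and $y$-differences arising from the anti-diagonal pairing line up precisely with the indices recorded in piece $5$, with no monomial double-counted. Once \eqref{eq:dmdn} is secured, each piece sits in $\N[\mathscr{G}\cup\mathscr{F}]$: pieces $1$ and $2$ are products of the generators $x_i, y_j\in\mathscr{G}\cup\mathscr{F}$; piece $4$ is a monomial times a single $\mathscr{F}$-generator; piece $5$ is a product of one $\mathscr{G}$-generator and one $\mathscr{F}$-generator, the constraint $\ell\le k\le m-1$ ensuring the correct order of indices (and the $y$-bracket vanishing identically when $\ell=k$); and in piece $3$ the factor $x_mx_{n-k}-x_{n+1}x_{m-k-1}$ telescopes with coefficient one into $\mathscr{G}$-generators along the straight-line path in index space joining the unordered pairs $\{m,n-k\}$ and $\{m-k-1,n+1\}$, each step decreasing the smaller index by one and increasing the larger by one, all of which stay in the admissible range $1\le\min\le\max$ thanks to $m\le n$ and $0\le k\le m-1$.
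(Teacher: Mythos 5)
Your derivation of \eqref{eq:dmdn} is correct and follows essentially the same route as the paper's Appendix proof: expand, peel off the boundary slices to isolate pieces $1$--$3$, extract piece $4$ from the part of the remaining rectangle where the $y$-bracket is already a canonical difference, and symmetrize over the anti-diagonal to produce piece $5$; the only difference is cosmetic (you peel the $y_0$-boundary first, the paper peels the $x_0$-boundary first). One small correction to your membership argument: the $x$-factor $x_kx_{n-m+1+\ell}-x_\ell x_{n-m+1+k}$ in piece $5$ is a single $\mathscr{G}$-generator only when $\min(k-\ell,\,n-m+1)=1$; in general it is a telescoping sum of $\min(k-\ell,\,n-m+1)$ generators, obtained by exactly the same equal-sum path argument you already invoke for piece $3$, so the conclusion $D_mD_n-D_{m-1}D_{n+1}\in\N[\mathscr{G}\cup\mathscr{F}]$ is unaffected.
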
 

\begin{proof}[Proof of Theorem \ref{thm:conv:logconcave}]
Observe that $A_k$ is $w$th convolution of sequence
 $(a_{i,k}),\,i=1,2,\ldots,w$ such that 
 \[
  \sum_{k=0}^\infty a_{i,k}u^k = \exp\Bigg( \sum_{j=0}^\infty
 \frac{c_{i,j}}{j} u^j
\Bigg)
 \]
 holds. By the induction, it suffices to prove the result
 for the bivariate convolution. Let $x_i\in\R_+ [\mathscr{Y}_1]$ and
 $y_j\in \R_+ [\mathscr{Y}_2]$ and suppose that for $1\le m\le n$
 \begin{align*}
  x_mx_n-x_{m-1}x_{n+1} \in \R_+ [\mathscr{Y}_1]\quad \mathrm{and}\quad
  y_my_n-y_{m-1}y_{n+1} \in \R_+ [\mathscr{Y}_2].
 \end{align*}
 Then due to Lemma \ref{lem:dmdn}, the bivariate convolution $D_n=\sum_{k=0}^n x_k
 y_{n-k},\,n\ge 0$ satisfies  
 \[
  D_mD_n - D_{m-1}D_{n+1} \in \R_+ [\mathscr{Y}_1 \cup \mathscr{Y}_2],\quad
 1 \le m\le n. 
 \]
Therefore \eqref{eq:conv-ccv1} holds. Equation \eqref{eq:conv-ccv2}
 follows from Lemma $2$ of \cite{Schirmacher:1999}. 
\end{proof}

\begin{remark}[Remark for Lemma \ref{lem:dmdn}]
 $(i)$ Equation \eqref{eq:dmdn} would be calculated from the
 Cauchy-Binet formula. In the proof of \cite{Schirmacher:1999}
 only the formula is mentioned and the detail is omitted.  
 We confirm the derivation by applying the formula to 
 \begin{align*}
  A &= \left(
    \begin{array}{ccccc}
      y_0 & y_1 & \cdots  & y_n &y_{n+1} \\
      0 & y_0 &  \cdots & y_{n-1} &y_n 
    \end{array}
  \right), \\
 B &= \left(
    \begin{array}{cccccccc}
      x_m & x_{m-1} & \cdots & x_0 & 0 & \cdots  & 0 & 0 \\
      x_{n+1} & x_n & \cdots & x_{n-m+1} & x_{n-m} & \cdots &x_1 &x_0  
    \end{array}
  \right)'.  
 \end{align*}
 See also \cite{stanley:1989}. \\ 
 $(ii)$ Menon \cite{menon:1969} proved that log-concavity is preserved
 under convolution by a direct calculation, i.e. proved the case $n=m$ in
 Lemma \ref{lem:dmdn}. Our results with $n=m$ coincides
 the corresponding calculation in \cite{menon:1969}. \\
 $(iii)$ From \eqref{eq:dmdn} one could observe that if one of sequences
 $(x_k)$ and $(y_k)$ is log-convex, then both positive and negative terms
 appear in \eqref{eq:dmdn}. Accordingly we cloud not judge the sign of
 $D_mD_n-D_{m-1}D_{n+1}$ without assuming additional conditions. 
\end{remark}

\begin{remark}[Remark for Theorem \ref{thm:conv:logconcave}]
 Assume that $(c_{i,j})_{j\ge 1},\,i=1,\ldots,w$ are log-concave in $j$ with
 $c_{i,0}=1$. \\
 $(i)$ It is immediate from equations \eqref{eq:conv-ccv1} and
 \eqref{eq:conv-ccv3} that 
\[
  (n+1)B_nB_m- mB_{m-1}B_{n+1} \ge 0\ \Leftrightarrow\ A_mA_n-A_{m-1}A_{n+1}\ge 0. 
 \]
 Notice that the result
 of Theorem \ref{thm:conv:logconcave} is not covered by the original
 version (Theorem \ref{thm:main}) since the sum $C_j=\sum_{i=1}^w
 c_{i,j}$ in the relation 
 \[
   \sum_{k=0}^\infty A_k u^k = \sum_{k=0}^\infty \frac{B_k u^k}{k!}
 =\exp\Big(
 \sum_{j=0}^\infty \frac{C_j}{j} u^j
\Big)
 \]
 does not always log-concave (see example \ref{ex1}). Namely, operations
 of (normal or binomial) convolution to sequences which satisfy
 \eqref{eq:main-relation} widens the class of sequence $(c_j)$ from
 which log-concavity of $(a_k)$ follows. \\
 $(ii)$ The log-concavity of $(A_k)$ \eqref{eq:conv-ccv1} and the log-convexity of
 $(B_k)$ \eqref{eq:conv-ccv4} are concluded from general theory : 
 The log-concavity is preserved by both ordinary and binomial convolution
 while log-convexity is preserved only by binomial convolution (see
 p.455 in \cite{lin:wan:2007}). \\
 $(iii)$ We do not know whether $(n+1)B_mB_n-mB_{m-1}B_{n+1}\in
 \N[\cup_{i=1}^w \mathscr{Y}_i]$ follows as in \eqref{poly1} or not. Our conjecture is
 positive, but it remains to be seen. 
\end{remark}

 The following is log-convex counterpart. 

 \begin{corollary}
  \label{cor:cov-lconvex}
  For $w\in\N$ let 
 \[
  \sum_{k=0}^\infty A_k u^k = \sum_{k=0}^\infty \frac{B_k u^k}{k!}=
  \exp\Bigg(
 \sum_{j=1}^\infty \frac{\sum_{i=1}^w c_{i,j}}{j}u^j
 \Bigg).
 \]
 Suppose that $(c_{i,k})_{k\ge 1},\,i=1,\ldots,w$ are respectively
  log-convex in $k$. Then for $1\le m\le n$ 
 \begin{align}
 \label{eq:conv-convex}
  mB_{m-1}B_{n+1}-(n+1)B_mB_n\ge 0
 \end{align}
 if and only if 
 \begin{align}
 \label{eq:condi:conv-convex}
 \sum_{i=1}^w c_{i,2}- \Big(
  \sum_{i=1}^w c_{i,1}
 \Big)^2 \ge 0. 
 \end{align}
 Equation \eqref{eq:conv-convex} is equivalent to 
 \begin{align*}
  A_{m-1}A_{n+1} -A_mA_n \ge 0. 
 \end{align*}
 \end{corollary}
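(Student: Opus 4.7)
The strategy is to reduce the corollary directly to Theorem~\ref{thm:main} applied with the scalar sequence $C_k := \sum_{i=1}^{w} c_{i,k}$. The one step in this reduction that does not come for free is verifying that $(C_k)_{k\ge 1}$ is itself log-convex, since only each individual $(c_{i,k})_{k\ge 1}$ is assumed so.

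For this I would isolate a short preservation lemma: if $(x_k)_{k\ge 1}$ and $(y_k)_{k\ge 1}$ have positive entries and are each log-convex, then so is $(x_k+y_k)_{k\ge 1}$. Indeed
\[
(x_{k-1}+y_{k-1})(x_{k+1}+y_{k+1}) \ge x_k^2 + y_k^2 + 2\sqrt{x_{k-1}x_{k+1}\,y_{k-1}y_{k+1}} \ge (x_k+y_k)^2,
\]
where the first bound uses log-convexity of each sequence termwise and the second is AM-GM. Iterating this $w-1$ times yields log-convexity of $(C_k)_{k\ge 1}$, and the summability hypothesis of Theorem~\ref{thm:main} for $(C_k/k)$ is inherited directly from that of each $(c_{i,k}/k)$.

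With these in hand, Theorem~\ref{thm:main} applied to the exponential formula of the corollary produces, via \eqref{ineq:a_k:convex} and \eqref{ineq:b_k:convex}, the diagonal ($m=n$) statements $A_{k-1}A_{k+1}\ge A_k^2$ and $B_k^2\le \tfrac{k}{k+1}B_{k-1}B_{k+1}$, each equivalent to the scalar condition $C_1^2\le C_2$, which is precisely \eqref{eq:condi:conv-convex}. To pass from the diagonal to the full range $1\le m\le n$ in \eqref{eq:conv-convex}, I would rewrite log-convexity of $(A_k)$ as monotonicity of the ratios $A_{k+1}/A_k$ in $k$; then $A_{n+1}/A_n \ge A_m/A_{m-1}$ yields $A_{m-1}A_{n+1}\ge A_mA_n$, and converting via $B_k = k!\,A_k$ gives $mB_{m-1}B_{n+1}\ge (n+1)B_mB_n$, which also establishes the asserted equivalence of the $(A_k)$- and $(B_k)$-forms of \eqref{eq:conv-convex}. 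For the ``only if'' direction the case $m=n=1$ already suffices, since a direct computation of the first three coefficients of $\exp(\sum_j C_j u^j/j)$ gives $B_0B_2-2B_1^2=C_2-C_1^2$, so the inequality forces \eqref{eq:condi:conv-convex}. The only non-routine step is the sum-preservation lemma; everything else is a direct transfer from Theorem~\ref{thm:main}.
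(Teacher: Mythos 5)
Your proof is correct, and its overall reduction --- pass to the summed sequence $C_k=\sum_{i=1}^w c_{i,k}$, verify that log-convexity survives summation, observe that the threshold condition $C_2\ge C_1^2$ is exactly \eqref{eq:condi:conv-convex}, and get the ``only if'' direction from $B_0B_2-2B_1^2=C_2-C_1^2$ --- is the same as the paper's. Where you diverge is the final step: the paper invokes its structural Theorem \ref{thm-main2}, which already delivers $mB_{m-1}B_{n+1}-(n+1)B_mB_n$ for the whole range $1\le m\le n$ as an element of $\N[\mathscr{W}]/\N[\mathscr{X}]$ built from the generators $C_{j-1}C_{k+1}-C_jC_k$, so non-negativity is immediate once those generators are checked to be non-negative; you instead invoke Theorem \ref{thm:main}, which only yields the consecutive-index (diagonal) inequalities, and then recover the full range by the monotone-ratio chaining $A_{n+1}/A_n\ge A_m/A_{m-1}$. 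Both routes are valid; yours is more elementary in that it never needs the polynomial-membership statement, while the paper's keeps the stronger structural information (and, in effect, the same chaining is hidden inside the proof of Theorem \ref{thm-main2} at \eqref{eq:induction1}). Two small remarks: your AM--GM verification that a sum of log-convex sequences is log-convex is more detailed than the paper, which merely asserts the fact; and your ratio argument implicitly divides by $A_k$, which is harmless here because a non-negative sequence satisfying $A_{k-1}A_{k+1}\ge A_k^2$ can vanish only on a terminal segment, but deserves a word --- the paper is equally cavalier on this point.
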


 \begin{proof}
  Let $C_0=1$ and $C_j=\sum_{i=1}^w c_{i,j}$. Put
  $\mathscr{V}=\{C_1,C_2,\ldots\}$ and 
 \[
  \mathscr{W}= \mathscr{V} \cup \{C_{j-1}C_{k+1}-C_j C_k : 0<j\le
  k\}.
 \]
 Then the relation \eqref{eq:main-relation} concludes via Theorem
  \ref{thm-main2} that 
 \[
  mB_{m-1}B_{n+1}-(n+1)B_mB_n\in
  \frac{\N[\mathscr{W}]}{\N[\mathscr{V}]}\quad \mathrm{for}\quad 1\le
  m\le n. 
 \] 
 Since log-convexity is preserved under the summation, $(C_k)_{k\ge 1}$
  is again log-convex. Moreover, since 
 \[
  C_0C_2- C_1^2 = \sum_{i=1}^w c_{i,2}-\Big(
 \sum_{i=1}^w c_{i,1}
 \Big)^2 \ge 0
 \]
 implies $C_0C_{k+1}-C_1C_k\ge 0,\ k\ge 1$, we conclude 
\[
 mB_{m-1}B_{n+1}-(n+1)B_mB_n\ge 0 \quad \Leftrightarrow \quad
  A_{m-1}A_{n+1}-A_m A_n \ge 0 
\]
 follows. Notice that $B_0=1,\,B_1=C_1$ and $B_2=C_2+C_1^2$ imply 
 \[
  B_0B_2-B_1^2= C_0C_2-C_1^2, 
 \]
 which proves the 'only if' part. 
 \end{proof}
 
 \begin{remark}
  $(i)$ Log-convexity is generally not preserved by
  convolution. However, under the condition \eqref{eq:condi:conv-convex} of Corollary
  \ref{cor:cov-lconvex} it is
  preserved. \\
  $(ii)$ Log-convexity is preserved under binomial convolution. The
  result \eqref{eq:conv-convex} in Corollary
  \ref{cor:cov-lconvex} is slightly stronger but with the condition
  \eqref{eq:condi:conv-convex}. 
 \end{remark}

 In what follows, we see examples of convoluted sequences which
 reflect obtained theories in this section. We use statistical
 distributions for construction of sequences. 
 
 \begin{example}[Geometric distribution sequence]
 \label{ex1}
  Let $c_k=p(1-p)^{k-1},\,k=1,2,\ldots,p\in(0,1)$ and $c_0=1$ in
  \eqref{eq:main-relation}. We observe that $c_k^2-c_{k-1}c_{k+1}=0$, so that $(c_k)$ is both
  log-concave and log-convex. 
  Due to Theorem \ref{thm:main} together with $c_1^2-c_0c_2=p(2p-1)$, 
  the sequence $(a_k)$
  is log-concave $($resp. log-convex$)$ for $p\ge 1/2$ (resp. $p\le
  1/2$). Now in \eqref{eq:convexp} put $c_{1,k}=p(1-p)^{k-1}$ with $c_{1,0}=1$ and 
  prepare another Geometric sequence by
  $c_{2,k}=p'(1-p')^{k-1},\,p'\in(0,1),\,c_{2,0}=1$ and further define
  $C_k=c_{1,k}+c_{2,k}$. Note that $(C_k)_{k\ge1}$ is log-convex since 
  \[
   C_k^2-C_{k-1}C_{k+1}=-pp'(1-p)^{k-1}(1-p')^{k-1}(2-p-p')^2 \le 0.
  \]
  However, from Theorem \ref{thm:conv:logconcave} if $p,p' \ge 1/2$,
  $(A_k)$ of \eqref{eq:convexp} is log-concave, the result is not included in the original
  Theorem \ref{thm:main}. On the other hand even if $p,p'\le 1/2$,
  $(A_k)$ is not always log-convex since 
  \[
   C_1^2-C_0C_2 = (p+p')(2(p+p')-1)-2pp' >0
  \]
  for $p=p'=1/2$. Furthermore if $p+p'\le 1/2$, then $(A_k)$ is
  log-convex, so that this provides an example such that log-convexity
  is preserved under normal convolution. 
 \end{example}

 \begin{example}[Log-series distribution sequence]
  Prepare $n$ sequences $c_{i,k}=p_i^k/(k\log
  (1-p_i)),\,p_i\in(0,1),\,i=1,2,\ldots,n,\,k=1,2,\ldots$, each of which
  is known to be a log-series distribution. For each $i$, 
 \[
  c_{i,k}^2-c_{i,k-1}c_{i,k+1} \le 0, 
 \]
 so that $(c_{i,k})$ are log-convex sequences in $k$. We put $c_k=c_{1,k}$ in the
  relation \eqref{eq:main-relation}. Then Theorem \ref{thm:main}
  concludes log-convexity of $(a_k)$ if $p_1 \ge 1-e^{-2}$ since 
 \[
  c_1^2 -c_0c_2 = \frac{p_1^2}{2\log^2 (1-p_1)}(2+\log (1-p_1)). 
 \]
 Now we consider the $n$th convolution and put $C_k=\sum_{i=1}^n
  c_{i,k}$ in \eqref{eq:convexp}. Here $(C_k)_{k\ge0}$ with $C_0=1$ is log-convex and $(A_k)$
  is $n$th convolution of the corresponding $(a_{i,k})'s,\,i=1,2,\ldots,n.$ Since 
 \begin{align*}
  C_1^2 - C_0C_2 &= (\sum_{i=1}^n c_{i,1})^2 - \sum_{i=1}^n c_{i,2} \\
 & \le n \sum_{i=1}^n c_{i,1}^2 -\sum_{i=1}^n c_{i,2} \\
 & \le \sum_{i=1}^n \frac{p_i^2}{2\log^2(1-p_i)}(2n+\log (1-p_i)) \le 0 
 \end{align*} 
 if $p_{i,k}\ge 1-e^{-2n}$ respectively, $(A_k)$ is shown to be
  log-convex. This is an example such that convolution of log-convex
  sequences yields again log-convex sequences. 
 
 \end{example}



\appendix 
\section{Proof of Lemma \cite{Schirmacher:1999}}
By\ adjusting number of components in sums and changing subscripts, we
have 
\begin{align*}
& D_mD_n-D_{m-1}D_{n+1} \\
& = x_0(y_n D_m-y_{n+1}D_{m-1})\ (:=\mathrm{I}) \\
& \quad +y_0\Big(
 x_m\sum_{k=0}^{n-1}x_{k+1}y_{n-1-k}-x_{n+1} \sum_{k=0}^{m-1}x_ky_{m-1-k}
\Big)\ (:=\mathrm{II}) \\
& \quad + \sum_{k=0}^{m-1} \sum_{\ell=0}^{n-1}
x_{k} x_{\ell+1} (y_{m-k}y_{n-1-\ell}-y_{m-1-k}y_{n-\ell})\ (:=\mathrm{III}).
\end{align*}
Equation $(\mathrm{I})$ corresponds to the first term and the sum with
$\ell=0$ in the $4$th term of 
\eqref{eq:dmdn}.  Moreover, we observe that
\begin{align*}
 \mathrm{II} &= y_0 x_m \sum_{k=0}^{n-m-1}x_{k+1}y_{n-1-k} + y_0
 \sum_{k=0}^{m-1} y_{m-1-k}(x_mx_{n-m+1+k}-x_{n+1}x_k).
\end{align*}
Then by reversing indices of k in the latter sum, the expression is shown to coincide with 
sum of the second and the third terms of \eqref{eq:dmdn} respectively. Again by
shifting indices, we obtain 
\begin{align*}
 \mathrm{III} &= \sum_{k=0}^{m-1} \sum_{\ell=1}^{n-m}
 x_kx_{\ell} (y_{m-k}y_{n-\ell}-y_{m-1-k}y_{n-\ell+1}) \\
 & \quad + \sum_{k=0}^{m-1}
 \sum_{\ell=0}^{m-1}x_kx_{n-m+1+\ell}(y_{m-k}y_{m-1-\ell}-y_{m-1-k}y_{m-\ell}). 
\end{align*}
The first sum is equivalent to the sum with $\ell\ge 1$ in $4$th sum of \eqref{eq:dmdn}, and due to the symmetry of
indices, the second sum is written as the $5$th sum of 
\eqref{eq:dmdn}. Now the proof is over. \hfill $\Box$

\section{Results in Hansen \cite{hansen:1988}}

Our results are heavily depends on Hansen \cite[Lemmas 1,2 and Theorems
1,2]{hansen:1988}. In \cite{hansen:1988}, proofs are sometimes
omitted.
For readers' sake we restate results and review proofs recovering
omitted parts. For convenience write $r_k=\lambda(k+1)f_{k+1}$ so that
the recursion \eqref{eq:panjer:rec} has 
\begin{align}
\label{eq:omit:reccursion}
 (n+1) P_{n+1} = \sum_{k=0}^n r_k P_{n-k}. 
\end{align}
We are starting with two Lemmas. 

\begin{lemma}[Lemma 2 of \cite{hansen:1988}]
\label{app:lem1}
 Assume \eqref{eq:omit:reccursion} and let $P_{-1}=0$. Then
\begin{align}
\label{eq:relaton:log-concave}
\begin{split}
 m(&m+2)(P_{m+1}^2 - P_mP_{m+2})\\ 
        &= P_{m+1}(r_0P_m-P_{m+1}) + \sum_{\ell=0}^m \sum_{k=0}^\ell
 (P_{m-\ell}P_{m-k-1}-P_{m-k}P_{m-\ell-1})(r_{k+1}r_\ell-r_{\ell+1}r_k),  
\end{split}
\end{align}
\begin{align}
\label{eq:relaton:log-convex}
\begin{split}
 r_{m+1}&(m+2)(P_{m+1}P_{m+3}-P^2_{m+2}) \\
&= P_{m+1}(r_{m+2}P_{m+2}-r_{m+1}P_{m+3}) + \sum_{k=0}^m  
(P_{m-k}P_{m+2}-P_{m+1}P_{m-k+1})(r_{m+2}r_k-r_{k+1}r_{m+1}). 
\end{split}
\end{align}
\end{lemma}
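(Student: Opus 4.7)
The plan is to verify both identities as purely algebraic statements, using only the recursion \eqref{eq:omit:reccursion} together with the boundary convention $P_{-1}=0$. In each case I would start from the right-hand side, expand every convolution-like sum so that the recursion can collapse it back into a single $(n+1)P_{n+1}$ expression, and then collect terms until the left-hand side emerges.

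For \eqref{eq:relaton:log-concave} the first move I would make is to exploit a symmetry: each of the two factors $(P_{m-\ell}P_{m-k-1}-P_{m-k}P_{m-\ell-1})$ and $(r_{k+1}r_\ell-r_{\ell+1}r_k)$ is antisymmetric under $k\leftrightarrow\ell$, so their product is symmetric and vanishes on the diagonal. Hence the triangular sum in the statement equals $\tfrac12$ of the corresponding full square sum. Expanding $(A-B)(C-D)$ into four pieces and pairing the $AD$ with $BC$ and the $AC$ with $BD$ under the swap, the double sum reduces to
\[
\sum_{k=0}^{m-1}\sum_{\ell=0}^{m} P_{m-\ell}P_{m-k-1}\,(r_{k+1}r_\ell - r_{\ell+1}r_k),
\]
the term $k=m$ dropping by $P_{-1}=0$. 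The inner $\ell$-sum is now either $(m+1)P_{m+1}$ or $(m+2)P_{m+2}-r_0 P_{m+1}$ by the recursion, and the remaining $k$-sum uses $\sum_{k=0}^{m-1} r_k P_{m-1-k}=mP_m$. I would then collect to obtain $(m+1)^2 P_{m+1}^2 - r_0 P_m P_{m+1} - m(m+2)P_m P_{m+2}$, and observe that adding the boundary term $P_{m+1}(r_0 P_m-P_{m+1})$ from the RHS of the identity and using $(m+1)^2-1=m(m+2)$ gives exactly $m(m+2)(P_{m+1}^2-P_m P_{m+2})$.

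For \eqref{eq:relaton:log-convex} the sum is single, so I would simply expand its four pieces term by term. Each one is, after factoring the relevant $r$'s and a $P_{m+1}$ or $P_{m+2}$ outside, a convolution $\sum_{k=0}^m r_{k+\varepsilon}P_{m+\delta-k}$ for $\varepsilon,\delta\in\{0,1\}$, which the recursion collapses to one of $(m+1)P_{m+1}$, $(m+2)P_{m+2}$, or $(m+3)P_{m+3}$, up to peeled tails $r_0 P_{m+1}$, $r_{m+1}P_0$, $r_0 P_{m+2}$ or $r_{m+2}P_0$. The boundary contributions $r_0 r_{m+1}P_{m+1}P_{m+2}$ and $r_{m+1}r_{m+2}P_0 P_{m+1}$ each appear twice with opposite signs and cancel, leaving $-r_{m+2}P_{m+1}P_{m+2}-(m+2)r_{m+1}P_{m+2}^2+(m+3)r_{m+1}P_{m+1}P_{m+3}$. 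Adding the explicit boundary term $P_{m+1}(r_{m+2}P_{m+2}-r_{m+1}P_{m+3})$ then telescopes to $(m+2)r_{m+1}(P_{m+1}P_{m+3}-P_{m+2}^2)$, as required.

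The hard part in both arguments is the bookkeeping of boundary tails: the recursion produces a clean $(n+1)P_{n+1}$ only after one carefully peels off the $r_0 P_n$ and $r_n P_0$ endpoints, and matching RHS to LHS hinges on those tails either cancelling in symmetric pairs (first identity) or telescoping against the explicit boundary term $P_{m+1}(\cdots)$ (second identity). The antisymmetry-plus-symmetrization trick for the first identity is also essential, since without it one is forced to work directly with the ordered range $k\le\ell$ and the reduction to a standard convolution no longer appears.
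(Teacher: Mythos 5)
Your proposal is correct and is essentially the paper's own argument run in reverse: the paper starts from the left-hand sides, writes $m(m+2)=(m+1)^2-1$, expands $(n+1)P_{n+1}$ as convolutions via the recursion, and then symmetrizes the resulting square double sum into the triangular one, while you start from the right-hand sides, desymmetrize the triangular sum into the full square sum, and collapse the convolutions back. All the key ingredients — the boundary tails $r_0P_{m+1}$, $r_{m+1}P_0$, etc., their cancellation, and the vanishing diagonal — match the paper's computation, so there is nothing to add.
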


\begin{proof}
For \eqref{eq:relaton:log-concave}, we have 
\begin{align*}
 & m(m+2)P^2_{m+1} -m P_m (m+2)P_{m+2} \\
 & = (m+1)^2 P_{m+1}^2 - mP_m(m+2)P_{m+2}-P_{m+1}^2 \\
 & = \Big(\sum_{k=0}^m r_k P_{m-k}\Big)^2 - \Big(
 \sum_{k=0}^{m-1} r_k P_{m-1-k}
\Big)\Big(
 \sum_{k=0}^{m+1}r_k P_{m+1-k} 
\Big)- P_{m+1}^2 \\
 & = \sum_{k=0}^{m} r_k P_{m-k} \sum_{\ell=1}^m r_\ell P_{m-\ell}
 -\sum_{k=0}^{m-1} r_k P_{m-1-k} \sum_{\ell=1}^{m+1} r_\ell P_{m+1-\ell}
 + r_{0}P_m P_{m+1}-P_{m+1}^2 \\
 &= P_{m+1}(r_0 P_m-P_{m+1}) + \sum_{k=0}^m \sum_{\ell=0}^m
 (r_kr_{\ell+1}-r_{k+1}r_\ell) P_{m-k}P_{m-1-\ell} =  \eqref{eq:relaton:log-concave},
\end{align*}
where we notice that terms of $k=\ell$ are zero in the sum. 
Next we see \eqref{eq:relaton:log-convex} and write 
\begin{align*}
 & r_{m+1}(m+2)(P_{m+1}P_{m+3}-P_{m+2}^2) \\
 & = r_{m+1}(m+3)P_{m+1}P_{m+3}-r_{m+1}(m+2)P_{m+2}^2 -
 r_{m+1}P_{m+1}P_{m+3} \\
 &\quad +(m+2)
 r_{m+2}P_{m+1}P_{m+2}-(m+2)r_{m+2}P_{m+1}P_{m+2} \\
 & = r_{m+1}(m+3)P_{m+1}P_{m+3}-r_{m+1}(m+2)P_{m+2}^2 +
 (m+1)r_{m+2}P_{m+1}P_{m+2} -(m+2)r_{m+2}P_{m+1}P_{m+2} \\
 &\quad +r_{m+2}P_{m+1}P_{m+2}-r_{m+1}P_{m+1}P_{m+3} \\
 & = r_{m+1}P_{m+1}\sum_{k=0}^{m+2}r_k P_{m+2-k} -
 r_{m+1}P_{m+2}\sum_{k=0}^{m+1}r_k P_{m+1-k} +
 r_{m+2}P_{m+2}\sum_{k=0}^m r_k P_{m-k}  -
 r_{m+2}P_{m+1}\sum_{k=0}^{m+1}r_k P_{m+1-k}\\
 & \quad
 -r_{m+1}P_{m+1}P_{m+3}+r_{m+2}P_{m+1}P_{m+2} \\
 & = r_{m+1} P_{m+1} r_0 P_{m+2} +r_{m+1}P_{m+1}r_{m+2}P_0 +
 r_{m+1}P_{m+1} \sum_{k=1}^{m+1}r_k P_{m+2-k} \\
 & \quad - r_{m+1}P_{m+1}r_0P_{m+2} -
 r_{m+1}P_{m+2}\sum_{k=1}^{m+1}r_kP_{m+1-k} \\
 & \quad -r_{m+2}P_{m+1}r_{m+1}P_0 -r_{m+2}P_{m+1}\sum_{k=0}^m r_k
 P_{m+1-k} +r_{m+2}P_{m+2}\sum_{k=0}^m r_k P_{m-k} \\
 &\quad + P_{m+1}(r_{m+2}P_{m+2}-r_{m+1}P_{m+3}) \\
 &= P_{m+1}(r_{m+2}P_{m+2}-r_{m+1}P_{m+3}) \\
 &\quad + r_{m+1}P_{m+1}\sum_{k=0}^m
 r_{k+1}P_{m+1-k}-r_{m+2}P_{m+1}\sum_{k=0}^m r_k P_{m+1-k} \\
 &\quad -r_{m+1}P_{m+2}\sum_{k=0}^m r_{k+1}P_{m-k}
 +r_{m+2}P_{m+2}\sum_{k=0}^m r_k P_{m-k} \\
 &= P_{m+1}(r_{m+2}P_{m+2}-r_{m+1}P_{m+3}) \\
 &\quad + \sum_{k=0}^m P_{m+1}P_{m+1-k}(r_{m+1}r_{k+1}-r_{m+2}r_k) +
 \sum_{k=0}^m P_{m+2}P_{m-k}(r_{m+2}r_k-r_{m+1}r_{k+1}) \\
 &= \eqref{eq:relaton:log-convex}. 
\end{align*}
\end{proof}

\begin{lemma}[Lemma 2 of \cite{hansen:1988}]
 Assume \eqref{eq:omit:reccursion} and $P_0>0$, then \\
$(i)$ if $(P_n)$ is strictly log-concave for $n=1,2,\ldots,m$, then
 $r_0P_m-P_{m+1}>0$, \\
$(ii)$ if $(r_n)$ is strictly log-convex and $r_0^2-r_1<0$, then
 $r_{m+2}P_{m+2}-r_{m+1}P_{m+3}>0$. 
\end{lemma}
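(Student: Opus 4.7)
The plan is to prove the two parts separately, each being essentially a short direct argument once one sees the right manipulation.

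For $(i)$, I would exploit strict log-concavity to produce a strictly decreasing chain of ratios $P_{n+1}/P_n$ and identify the top of the chain from the recursion. At $n=0$, \eqref{eq:omit:reccursion} gives $P_1=r_0 P_0$, hence $P_1/P_0=r_0$. Strict log-concavity at $n=1,\ldots,m$ is equivalent to the chain of inequalities $P_{k+1}/P_k<P_k/P_{k-1}$ for each $k=1,\ldots,m$; concatenating them yields $P_{m+1}/P_m<P_1/P_0=r_0$. Multiplying by $P_m>0$ (positivity of every $P_n$ is immediate from \eqref{eq:omit:reccursion} with $P_0>0$ and $r_k\ge 0$) yields $r_0P_m-P_{m+1}>0$.

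For $(ii)$ I would proceed by a direct computation rather than an induction, transforming $(m+3)(r_{m+2}P_{m+2}-r_{m+1}P_{m+3})$ into a sum of manifestly positive terms. Using \eqref{eq:omit:reccursion} I write $(m+3)P_{m+3}=\sum_{k=0}^{m+2}r_k P_{m+2-k}$ and $(m+3)P_{m+2}=(m+2)P_{m+2}+P_{m+2}=\sum_{k=0}^{m+1}r_k P_{m+1-k}+P_{m+2}$. Splitting off the $k=0$ term from the first sum and re-indexing the remainder so that both sums carry $P_{m+1-k}$, one obtains
\begin{equation*}
(m+3)(r_{m+2}P_{m+2}-r_{m+1}P_{m+3})=\sum_{k=0}^{m+1}(r_{m+2}r_k-r_{m+1}r_{k+1})P_{m+1-k}+(r_{m+2}-r_0 r_{m+1})P_{m+2}.
\end{equation*}
Each summand in the sum is non-negative by strict log-convexity of $(r_n)$: the ratio $r_{k+1}/r_k$ is strictly increasing, so $r_{m+2}/r_{m+1}\ge r_{k+1}/r_k$ with equality only at $k=m+1$; in particular the sum is strictly positive since $P_{m+1-k}>0$ for $k\le m$. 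The residual term is positive because the strict chain $r_{m+2}/r_{m+1}>\cdots>r_1/r_0$ together with the auxiliary hypothesis $r_0^2<r_1$ (equivalent to $r_1/r_0>r_0$) gives $r_{m+2}>r_0 r_{m+1}$.

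The main obstacle is the index bookkeeping in $(ii)$; the trick is to use two different ways of applying the recursion, one to $P_{m+3}$ and one to $P_{m+2}$ padded by an extra $P_{m+2}$, so that after a single index shift the two sums share a common range and the log-convexity differences $r_{m+2}r_k-r_{m+1}r_{k+1}$ appear naturally. The hypothesis $r_0^2<r_1$ is used exclusively to dominate the residual boundary term, by extending the chain of $r$-ratios past $r_0$; without it the conclusion can fail at small $m$.
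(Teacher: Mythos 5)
Your proof is correct and takes essentially the same route as the paper: part $(i)$ is the identical ratio-chain argument $r_0=P_1/P_0>P_{m+1}/P_m$, and part $(ii)$ performs the same manipulation (split off the $k=0$ term of the recursion for $(m+3)P_{m+3}$, shift the index, and compare each ratio $r_{k+1}/r_k$ with $r_{m+2}/r_{m+1}$, using $r_0^2<r_1$ only for the boundary term). The sole difference is presentational: you record the exact identity with termwise non-negative summands, whereas the paper writes the corresponding inequality chain ending in $(m+3)P_{m+3}\le (m+3)P_{m+2}\,r_{m+2}/r_{m+1}$; your version has the minor advantage of making the strictness of the conclusion immediate.
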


\begin{proof}
 $(i)$ Since $(P_{n+1}/P_n)$ is decreasing, we have
 $r_0=P_1/P_0>P_{m+1}/P_m$. \\
 $(ii)$ Since $(r_{m+1}/r_m)$ is increasing, the recursion
 \eqref{eq:omit:reccursion} yields 
 \begin{align*}
  (m+3)P_{m+3} &= r_0P_{m+2} + \sum_{k=0}^{m+1} r_k P_{m+1-k}
  \frac{r_{k+1}}{r_k} \\
  &\le P_{m+2} \frac{r_{m+2}}{r_{m+1}} + (m+2) P_{m+2} \max_{1\le k\le
  m+2}\left\{
 \frac{r_k}{r_{k-1}}
\right\} \\
  &= (m+3)P_{m+2}\frac{r_{m+2}}{r_{m+1}}. 
 \end{align*}
\end{proof}

\begin{proof}[Proof of Theorem \ref{thm:hansen}]
 Log-concave part :
 Suppose $(r_k)$ (or equivalently $(kf_k)$) is strictly log-concave and
 $r_0^2-r_1\ge0$ ($\Leftrightarrow\, \lambda f_1^2 -2f_2$). Then 
 \begin{align*}
  2(P_1^2-P_0P_2)= P_0^2 (r_0^2-r_1)>0. 
 \end{align*} 
Now with this and Lemma \ref{app:lem1} $(i)$, we apply induction to
 \eqref{eq:relaton:log-concave} to see that $(P_n)$ is strictly
 log-concave. Since any log-concave sequence can be written as a limit
 of strictly log-concave sequences, the proof is completed. \\
 Log-convex part : The proof is similar to ``Log-concave part'', except for applying
 the induction to \eqref{eq:relaton:log-convex} and Lemma \ref{app:lem1}
 $(ii)$. 
\end{proof}

\noindent {\bf Acknowledgment:}
I am grateful to Tomasz Rolski for indicating the paper by Bender and Canfield
when we were working in a joint work. I also would like to thank Hidehiko Kamiya for useful
comments which are helpful in revising our paper.

\end{document}